\newcommand{\Z}{\mathbb Z}
\newcommand{\Prob}{\mathbb P}
\newcommand\Y{\mathbb Y}
\newcommand{\la}{\lambda}
\newcommand{\si}{\sigma}
\newcommand\Si{\Sigma}
\newcommand\epsi{\varepsilon}
\newcommand{\Sym}{\mathfrak S}
\newcommand{\Q}{\mathcal Q}
\newcommand\G{\mathcal G}
\newcommand\adm{{\operatorname{adm}}}
\newcommand\bal{{\operatorname{bal}}}
\newcommand\fin{{\operatorname{fin}}}
\newcommand\inv{{\operatorname{inv}}}
\newcommand\rk{{\operatorname{rk}}}
\newcommand\Ord{{\operatorname{Ord}}}
\newcommand\sgn{{\operatorname{sgn}}}
\newcommand\const{{\operatorname{const}}}
\newtheorem{theorem}{Theorem}[section]
\newtheorem{proposition}[theorem]{Proposition}
\newtheorem{corollary}[theorem]{Corollary}
\newtheorem{lemma}[theorem]{Lemma}
\theoremstyle{definition}
\newtheorem{definition}[theorem]{Definition}
\newtheorem{remark}[theorem]{Remark}
\newtheorem{example}[theorem]{Example}
\begin{document}
\title[Extension of the Mallows model for random permutations]
{The two-sided infinite extension of the Mallows model for random permutations}
\author{Alexander Gnedin}
\address{Department of Mathematics, Utrecht University, the Netherlands}
\email{A.V.Gnedin@uu.nl}

\author{Grigori Olshanski}
\address{Institute for Information Transmission Problems, Moscow, Russia;
\newline\indent and Independent University of Moscow, Russia} \email{olsh2007@gmail.com}

\date{}

\thanks{G.~O. was supported by the RFBR-CNRS grant 10-01-93114
and the project SFB 701 of Bielefeld University.}

\begin{abstract}
\noindent We introduce a probability distribution $\Q$ on the infinite group
$\Sym_\Z$ of permutations of the set of integers $\Z$. Distribution $\Q$ is a
natural extension of the Mallows distribution on the finite symmetric group. A
one-sided infinite counterpart of $\Q$, supported by the group of permutations
of $\mathbb N$, was studied previously in our paper [Gnedin and Olshanski, Ann.
Prob. 38 (2010), 2103-2135; arXiv:0907.3275]. We analyze various features of
$\Q$ such as its symmetries, the support, and the marginal distributions.
\end{abstract}

\maketitle

\tableofcontents

\section{Introduction}\label{section1}

Let ${\mathfrak S}_n$ be the group of permutations of $\{1,\dots,n\}$. A random
permutation $\Sigma_n$ with the probability distribution
\begin{equation}\label{MalFin}
\Prob(\Sigma_n=\sigma)=c^{-1}_nq^{\inv(\sigma)}, \qquad 0<q<1,
\end{equation}
is one of the Mallows  models on $\mathfrak S_n$, see \cite{DR}. Here $\sigma$
ranges over $\Sym_n$,
$$
\inv(\sigma)=\#\{(i,j): 1\leq i< j\leq n,~\sigma(j)>\sigma(i)\}
$$
is the number of inversions, and
$$
c_n=[n!]_q=\prod_{i=1}^n\frac{1-q^i}{1-q}
$$
is the normalizing constant (a $q$-analog of the factorial $n!$). Distribution
\eqref{MalFin} is qualitatively different from the uniform in that it favors
the order: the probability is maximal at the identity permutation
$\sigma(i)=i$, and falls off exponentially as the number of inversions
increases.

The Mallows model  was introduced in connection with ranking problems in
statistics (see \cite{Mallows} and a more general definition in \cite[p. 104,
Example 3]{Di88}) and recently it appeared in connection with random sorting
algorithms and trees \cite{Benjamini}, \cite{DR}, \cite{Evans}. The
distributions of displacements $\Sigma_n(i)-i$ are tight as $n\to\infty$ for
each fixed $i$, which suggests understanding $\Sigma_n$ as a random function
with linear trend. To compare, the order of displacements in the uniform
permutation is $O(n)$. See \cite{Muller} and references therein for some
large-$n$ properties of the Mallows model when $q=q_n$ varies in such a way
that $(1-q_n)n\to{\rm const}$, so $\Sigma_n$ approaches the uniform permutation
in this regime.

In our previous paper \cite{Qexch} we observed that \eqref{MalFin} is the
unique distribution on ${\mathfrak S}_n$ with the property of
$q$-exchangeability, which means that by swapping the values in any two
adjacent positions $i$ and $i+1$ the probability of permutation is multiplied
by $q^{{\rm sgn}(\sigma(i+1)-\sigma(i))}$ where $\sgn(x)=\pm1$ according to the
sign of $x\ne0$. In the course of generalizing this property to arbitrary
infinite real-valued sequences we were lead  to introducing a random
permutation (bijection) $\Si_+:\Z_+\to\Z_+$ which extends \eqref{MalFin} to the
case ``$n=\infty$'', in the sense that, in suitable coordinates, the Mallows
measures appear as  consistent $n$-dimensional marginal distributions of
${\Si}_+$. One of the uses of $\Si_+$ is that many large-$n$ features of
$\Sigma_n$ can be recognized as properties of $\Sigma_+$.

In this paper we  introduce a further {\it two-sided infinite\/} extension of
the Mallows model, which is a random $q$-exchangeable bijection
$\Sigma:\Z\to\Z$. Permutations $\Sigma_+$ and $\Sigma$ share many common
features, among which are the invariance of the distribution under passing to
the inverse permutation, and the related property of quasi-invariance under
swapping positions of two adjacent  values  $j$ and $j+1$. Both  $\Sigma_+$ and
$\Sigma$ can be constructed from independent copies of the same geometric
random variable, but $\Sigma$ has more symmetries. A peculiar feature of the
two-sided counterpart is that the process of displacements $(\Sigma(i)-i,
~i\in\Z)$ is stationary. We shall describe the support of $\Sigma$ and derive
formulas for the joint distribution of the displacements in terms of some
series of the $q$-hypergeometric type.

From a general perspective, \eqref{MalFin} is a conditionally uniform
distribution on ${\mathfrak S}_n$ obtained as deformation of the uniform
distribution by exponential tilting of  distribution of some statistic of
permutations. Replacing $\inv(\sigma)$ in \eqref{MalFin} by the number of
cycles of $\sigma$ will yield the familiar Ewens distribution \cite{ABT}, which
is also an instance of the general Mallows model \cite[Section 4.4]{DH}. See
\cite{GOdescent}, \cite{GnedinDiscM} for other choices of the statistic.
Remarkably, the extended infinite counterparts of \eqref{MalFin} live on the
group of permutations, albeit these are no longer {\it finitary} permutations
that move finitely many integers. To compare, the (one-sided or two-sided)
infinite extension  of the uniform and of Ewens' measures on $\Sym_n$ are not
supported by the space of permutations (see \cite{KOV1}, \cite{KOV2},
\cite{Olsh} for a realization of extended Ewens' measures in the space of
virtual permutations). The extended Mallows and Ewens measures are
quasi-invariant with respect to left and right shifts by finitary permutations,
and both may be viewed as substitutes of the nonexisting finite Haar measure on
the  group of finitary permutations. The nice quasi-invariance properties of
the measures make it possible to construct families of unitary representations
of the group of finitary permutations, although it is still to be explored if
the extended Mallows measures may play in the harmonic analysis the role
similar to that of the  Ewens measure on virtual permutations (see \cite{KOV1},
\cite{KOV2}, \cite{Olsh}).

\section{Preliminaries on infinite permutations}\label{section2}

This section is aimed to introduce various classes of permutations, and to
distinguish the permutations considered as support of the to-be-constructed
measures from the permutations considered as transformations acting on this
support.

We use the standard notation $\Z$ for the set of integers. For $a\in\Z$ we set
$\Z_{<a}:=\{i\in\Z: i<a\}$. Likewise, we define the subsets $\Z_{\le a}$,
$\Z_{>a}$, and $\Z_{\ge a}$. A nonstandard convention of this paper is that
$$
\Z_-:=\Z_{\le0}=\{\dots,-1,0\}, \qquad \Z_+:=\Z_{\ge1}=\{1,2,\dots\}.
$$

Under {\it permutation of\/ $\Z$} we shall understand an arbitrary bijection
$\sigma:\Z\to\Z$. Let $\mathfrak S$ denote the group of all permutations of
$\Z$. We associate with $\si\in\Sym$ an infinite 0\,-1 matrix $A=A(\si)$ of
format $\Z\times\Z$ such that the $(i,j)$th entry of $A(\si)$ is ${\bf
1}(\si(j)=i)$. Here and throughout ${\bf 1}(\cdots)$ equals $1$ if the
condition $\cdots$ is true, and equals $0$ otherwise. Observe that the group
operation on permutations agrees with the matrix multiplication, that is
$A(\si\tau)=A(\si)A(\tau)$. Write $A=A(\si)$ as a $2\times2$ block matrix
$$
A=\begin{bmatrix} A_{--} & A_{-+}\\ A_{+-} & A_{++} \end{bmatrix}
$$
according to the splitting $\Z=\Z_-\sqcup\Z_+$. For matrix $B$   let $\rk(B)$
be the rank of $B$, which is equal to the number of 1's in $B$ if $B$ is a
submatrix of $A(\sigma)$. We call $\si$ {\it admissible\/} if both
$\rk(A_{-+})$ and $\rk(A_{+-})$ are finite. The set of admissible permutations
will be denoted $\Sym^\adm\subset\Sym$.

\begin{remark}\label{rem2}
There is a similarity between our  definition of admissibility and the concept
of {\it restricted\/} matrix for infinite-dimensional classical matrix groups,
as found e.g. in \cite[Definition 6.2.1]{PS}.
\end{remark}

For  $\sigma\in\Sym^\adm$ we define the {\it balance} as
$$
b(\si):=\rk(A_{-+})-\rk(A_{+-}).
$$
It is readily checked that the same value  $b(\si)$ appears if instead of the
splitting $\Z=\Z_-\sqcup\Z_+$ the difference of ranks is computed with respect
to  any other splitting of the form $\Z=\Z_{\le a}\sqcup\Z_{>a}$ with $a\in\Z$.

\begin{example} For a {\it shift permutation}
$$
s^{(b)}:i\mapsto i-b, \qquad b\in\Z,
$$
we have
$$
\rk(A_{-+})=\begin{cases} n, & b>0\\ 0, & b\le0 \end{cases} \quad \textrm{and}
\quad  \rk(A_{+-})=\begin{cases} |b|, & b<0\\ 0, & b\ge0 \end{cases}
$$
It follows that $s^{(b)}$ is admissible and has balance $b$.
\end{example}

\medskip

For $I\subset \Z$ let $\Sym_I$ denote the set of permutations which satisfy
$\sigma(j)=j$ for $j\neq I$. The union
$$
\Sym^\fin:=\cup_{\{I:\#I<\infty\}}\Sym_I
$$
is the group of {\it finitary} permutations $\sigma$ which move only finitely
many integers, i.e. fulfill $\sigma(j)=j$ for $|j|$ sufficiently large. The
group $\Sym^\fin$ is countable, and is generated by the elementary
transpositions $\si_{i,i+1}$ that swap two adjacent integers $i,i+1\in\Z$. Note
that
$$
b(\si'\si)=b(\si\si')=b(\si), \qquad \si\in\Sym^\adm, \quad \si'\in\Sym^\fin,
$$
which is easy to check first  for the elementary transpositions and then by
induction for all $\si'\in \Sym^\fin$. It follows that every finitary
permutation is admissible and has balance 0.

\begin{proposition}\label{prop1}
{\rm(i)} $\Sym^\adm$ is a subgroup in $\Sym$.

{\rm(ii)} The balance is an additive character on $\Sym^\adm$, i.e.,
$$
b(\si\tau)=b(\si)+b(\tau), \qquad \si,\,\tau\in\Sym^\adm.
$$
\end{proposition}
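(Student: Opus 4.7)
The plan is to rewrite the ranks as set cardinalities: for $\sigma \in \Sym^\adm$ put $r_{-+}(\sigma) := \rk A(\sigma)_{-+} = |\sigma(\Z_+) \cap \Z_-|$ and $r_{+-}(\sigma) := \rk A(\sigma)_{+-} = |\sigma(\Z_-) \cap \Z_+|$, so that $b(\sigma) = r_{-+}(\sigma) - r_{+-}(\sigma)$. With this reformulation, (i) becomes a short counting exercise and (ii) reduces to summing a telescoping identity for a finitely supported function on $\Z$.

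For (i), closure under inversion is immediate from $A(\sigma^{-1}) = A(\sigma)^{T}$, which interchanges the $(-+)$ and $(+-)$ blocks up to transposition and hence preserves their ranks; in particular $b(\sigma^{-1}) = -b(\sigma)$. For closure under products I would decompose $\sigma\tau(\Z_+) \cap \Z_-$ disjointly according to whether $\tau(j) \in \Z_+$ or $\tau(j) \in \Z_-$; the two resulting pieces inject respectively into $\sigma(\Z_+) \cap \Z_-$ and $\tau(\Z_+) \cap \Z_-$, giving $r_{-+}(\sigma\tau) \le r_{-+}(\sigma) + r_{-+}(\tau)$, and a symmetric argument bounds $r_{+-}(\sigma\tau)$. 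Thus $\sigma\tau \in \Sym^\adm$, and together with admissibility of the identity and of inverses this proves that $\Sym^\adm$ is a subgroup.

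For (ii), define $f_\sigma := {\bf 1}_{\sigma(\Z_+)} - {\bf 1}_{\Z_+}$. A case-split at $i \in \Z_+$ versus $i \in \Z_-$ shows that $f_\sigma$ is supported on the finite set $(\sigma(\Z_+) \cap \Z_-) \cup (\sigma(\Z_-) \cap \Z_+)$ and satisfies $\sum_{i \in \Z} f_\sigma(i) = b(\sigma)$. The push-forward identity ${\bf 1}_{\sigma S} = {\bf 1}_S \circ \sigma^{-1}$ yields the chain rule
\[
f_{\sigma\tau} \;=\; f_\sigma \,+\, f_\tau \circ \sigma^{-1}.
\]
Since $f_\tau$ has finite support, the bijection $\sigma^{-1}$ preserves its sum, and summing the chain rule over $\Z$ produces $b(\sigma\tau) = b(\sigma) + b(\tau)$.

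The main hygienic point — and what I expect to be the only real source of trouble — is that every manipulation of a sum over $\Z$ must be justified by finite support of the summand; the admissibility hypothesis is exactly what licenses each such step, and one must resist the temptation to reindex sums of ${\bf 1}_{\sigma(\Z_+)}$ or ${\bf 1}_{\Z_+}$ separately.
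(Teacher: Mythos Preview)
Your proof is correct, and for part (ii) it takes a genuinely different route from the paper's.

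For (i) the two arguments are essentially the same idea in different clothing: the paper says matrix multiplication preserves finiteness of off-diagonal ranks, and you make this explicit via the set-theoretic bound $r_{-+}(\sigma\tau) \le r_{-+}(\sigma) + r_{-+}(\tau)$.

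For (ii) the approaches diverge. The paper proceeds structurally: it first checks (separately) that $b(s^{(n)}\sigma) = b(\sigma) + n$ and that finitary permutations have balance zero, and then asserts that any admissible $\sigma$ factors as $s^{(b(\sigma))}\sigma'\sigma''$ with $\sigma'$ finitary and $\sigma''$ block-diagonal; additivity then follows by reducing $\sigma\tau$ to a product of shifts. Your argument bypasses this decomposition entirely: the function $f_\sigma = {\bf 1}_{\sigma(\Z_+)} - {\bf 1}_{\Z_+}$ packages the balance as a finite sum, and the cocycle identity $f_{\sigma\tau} = f_\sigma + f_\tau \circ \sigma^{-1}$ delivers additivity in one line once you reindex the second term by the bijection $\sigma^{-1}$. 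Your approach is shorter and fully self-contained, and your remark about only reindexing finitely supported summands is exactly the right hygienic point. What the paper's approach buys is an explicit normal form for admissible permutations, which is of independent interest though not used elsewhere in the paper.
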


\begin{proof}
(i) This is obvious, for the matrix multiplication preserves finiteness of the
ranks for the off-diagonal blocks.

(ii) It is readily checked that for each $\si\in\Sym^\adm$,
\begin{equation}\label{eq5}
b(s^{(n)}\si)=b(\si s^{(n)})=b(\si)+n, \qquad n\in\Z.
\end{equation}
Using this, one can show that $\si$ can be written in the form
$$
\si=s^{(k)}\si'\si'', \qquad k=b(\si),
$$
where $\si'\in\Sym^\fin$ and $A(\si'')$ is a block-diagonal matrix.
Likewise, $\tau$ is representable as the product
$$
\tau=\tau''\tau's^{(l)}, \qquad l=b(\tau),
$$
where $\tau'\in\Sym^\fin$ and $A(\tau'')$ is a block-diagonal matrix.
It follows that the permutation
$$
\si\tau=s^{(k)}\si'\si''\tau''\tau's^{(l)}
$$
has the same balance as $s^{(k)}s^{(l)}=s^{(k+l)}$. Thus,
$b(\si\tau)=k+l=b(\si)+b(\tau)$, which concludes the proof.
\end{proof}

We call a permutation {\it balanced\/} if it is admissible and has balance 0.
We shall denote the set of balanced permutation $\Sym^\bal$. By Proposition
\ref{prop1}, $\Sym^\bal$ is a normal subgroup in $\Sym^\adm$, and the quotient
group $\Sym^\adm/\Sym^\bal$ is isomorphic to $\Z$.

Every finitary permutation is balanced. An example of admissible permutation
which is not finitary is the shift $s^{(n)}$ with $n\ne0$. An example of a
balanced permutation which is not finitary  is the permutation which swaps each
even integer $2i$ with $2i+1$.

We conclude the section with a brief discussion of topologies on infinite
permutations. Let $E$ denote the coordinate Hilbert space $\ell^2(\Z)$ and let
$U(E)$ be the group of unitary operators in $E$. This is a metrizable
topological group with respect to the weak operator topology, which coincides
(on unitary operators only!) with the strong operator topology. The assignment
$\si\to A(\si)$ determines an embedding $\Sym\to U(E)$, and we endow the group
$\Sym$ with the weak topology inherited from $U(E)$. This weak topology on
$\Sym$ is non-discrete and totally disconnected. (Note that taking the norm
topology on $U(E)$ would produce on $\Sym$ the discrete topology.)

Equivalently, the weak topology on $\Sym$ can also be described as the topology
inherited from the compact product space $\{0,1\}^{\Z\times\Z}$, where the
embedding $\Sym\to \{0,1\}^{\Z\times\Z}$ is determined by the entries of
permutation matrices. Convergence $\sigma_n\to\sigma$ in the weak topology
means that for every finite submatrix of $A(\sigma_n)$ the entries stabilize
for sufficiently large $n$. An equivalent condition is that for any fixed
$i\in\Z$ one has $\si_n(i)=\si(i)$ for all $n$ large enough: this fact is a
special instance of the coincidence of the weak and the strong operator
topologies on $U(E)$.

Note that the group $\Sym^\fin$ is dense in $\Sym$.

The probability measures on $\Sym$ or on its subgroups to be introduced in the
sequel will be Borel with respect to the weak topology. Convergence of such
measures will be understood in the weak sense. Specifically, with each measure
$\mu$ and finite set $I\subset \Z$ we associate a discrete measure $\mu_I$,
which is the projection of $\mu$ on the finite space $\{0,1\}^{I\times
I}\subset \{0,1\}^{\Z\times\Z}$ of 0\,-1 matrices over $I$. Convergence of a
sequence measures $\mu^{(n)}$ to $\mu$ means convergence of the projections
$\mu^{(n)}_I$ to $\mu_I$, and the latter simply amounts to the natural
convergence of measures on finite sets.

Finally, note that the subgroup $\Sym^\adm\subset\Sym$ can be endowed with a
finer topology; it is defined by taking as a fundamental system of
neighborhoods of the identity element the following system of subgroups in
$\Sym^\bal$ indexed by arbitrary finite integer intervals $[a,b]\subset\Z$,
$a,b\in\Z$, $a\le b$:
$$
\Sym^{[a,b]}:=\{\si\in\Sym: \si(\Z_{<a})=\Z_{<a}, \quad \si(\Z_{>b})=\Z_{>b},
\quad \si(i)=i, \; i\in[a,b]\}.
$$
For some reasons related to Remark \ref{rem2} this finer topology on
$\Sym^\adm$ seems to be more natural than the weak topology, but for our
purposes the weak topology is enough; we will exploit it in Section
\ref{section7}.

\section{$q$-Exchangeability and the interlacing construction}\label{section3}

The Mallows measures \eqref{MalFin} and their extensions possess a fundamental
quasi-invariance property. Recall that $\si_{i,i+1}$ denotes the elementary
transposition swapping two adjacent indices $i,\,i+1\in\Sym$.

\begin{definition}\label{def2}
Fix $q>0$ and let $\mu$ be a measure on the group $\Sym$. Following
\cite{Qexch} we say that $\mu$ is {\it right $q$-exchangeable\/} if for every
$i\in\Z$, the pushforward $\mu_{i,i+1}$ of the measure $\mu$ under the
transformation $\si\to\si\si_{i,i+1}$ is equivalent to $\mu$ and the value of
the Radon-Nikod{\'y}m derivative $d\mu_{i,i+1}/d\mu$ at an arbitrary point
$\si\in\Sym$ equals $q^{\sgn(\si(i+1)-\si(i))}$. Likewise, we call $\mu$ {\it
left $q$-exchangeable\/} if the similar condition holds for transformations
$\si\to\si_{i,i+1}\si$, with the Radon-Nykod{\'y}m derivative
$q^{\sgn(\si^{-1}(i+1)-\si^{-1}(i))}$.
\end{definition}

Note that if $\mu$ is right $q$-exchangeable, then its pushforward $\mu'$ under
the transformation $\si\to\si^{-1}$ is left $q$-exchangeable, and vice versa.

Definition \ref{def2} is obviously extended to the setting where  $\Sym$ is
replaced by the group $\Sym_I$ of permutations of a finite or semi-finite
interval $I$ of the ordered set $\Z$.

If $I$ is finite, then $\Sym_I$ is isomorphic to the symmetric group $\Sym_n$
of degree $n=\#I$. Then it is readily seen that the notions of right and left
versions of $q$-exchangeability coincide and mean that $\mu(\si)$ is
proportional to $q^{\inv(\si)}$, as in \eqref{MalFin}. Therefore, if we
additionally require that $\mu$ is a probability measure, such a measure is
unique and coincides with \eqref{MalFin}, subject to the relabelling of the
elements of $I$ by increasing bijection with $\{1,\ldots,n\}$.

In \cite{Qexch} we proved the following result:

\begin{theorem}\label{thm1}
Assume $0<q<1$. On the group $\Sym_{\Z_+}$ the two notions of
$q$-exchangeability coincide, and there exists a unique probability measure
$\Q^+$, which is both right and left $q$-exchangeable.
\end{theorem}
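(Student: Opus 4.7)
The plan is to establish existence by an explicit construction, argue uniqueness, and then identify the left and right versions of $q$-exchangeability.

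For existence, I would let $\xi_1,\xi_2,\ldots$ be i.i.d.\ random variables with $\Prob(\xi_i=k)=(1-q)q^k$ for $k\geq 0$, and build a random map $\Sigma_+:\Z_+\to\Z_+$ inductively by declaring $\Sigma_+(i)$ to be the $(\xi_i+1)$st smallest element of $\Z_+\setminus\{\Sigma_+(1),\ldots,\Sigma_+(i-1)\}$. A short Borel--Cantelli argument shows $\Sigma_+$ is almost surely a bijection: at any step when an integer $m$ is still minimal in the complement, the event $\xi_i=0$ has probability $1-q>0$, so $m$ is placed within finitely many further steps. The relative order type of $(\Sigma_+(1),\ldots,\Sigma_+(n))$ reproduces the Mallows measure (\ref{MalFin}) on $\Sym_n$, with the $\xi_i$ playing the role of the inversion code. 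Right $q$-exchangeability of the law $\Q^+$ then follows from the finite Mallows identity, since the transposition $\sigma\mapsto\sigma\si_{i,i+1}$ affects only finitely many coordinates and the swap can be analysed in the first $\max(i+1,N)$ positions for $N$ large.

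For uniqueness, let $\mu$ be any right $q$-exchangeable probability measure on $\Sym_{\Z_+}$. The relations for $\si_{i,i+1}$ with $i<n$ force the conditional distribution of $(\sigma(1),\ldots,\sigma(n))$ given the underlying set $\{\sigma(1),\ldots,\sigma(n)\}$ to be Mallows on $\Sym_n$ via the increasing identification. Using $\si_{n,n+1}$ and then iteratively $\si_{n+1,n+2},\si_{n+2,n+3},\ldots$, one connects the conditional events $\{\sigma(n+1)=b_k\}$ across the ordered list $b_1<b_2<\cdots$ of values not already used, producing consecutive ratios equal to $q$. Combined with the normalisation $\sum_{k\geq 1}\mu(\sigma(n+1)=b_k\mid\text{past})=1$, this pins down the conditional law as geometric on $\{b_1,b_2,\ldots\}$. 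Iterating and invoking Kolmogorov's consistency theorem gives $\mu=\Q^+$. To identify the left and right notions, it then suffices to show $\Sigma_+\ed\Sigma_+^{-1}$: finite submatrices of $A(\Sigma_+)$ carry the transpose symmetry $\inv(\sigma)=\inv(\sigma^{-1})$ from the finite Mallows measure, and this promotes to the infinite law by consistency. The observation following Definition~\ref{def2} then shows that $\Q^+$ is also left $q$-exchangeable, and the uniqueness argument with the roles of left and right interchanged produces no further measure.

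The main obstacle is the uniqueness step: the $q$-exchangeability relations are purely local (one adjacent swap at a time), so one must combine them with the probability normalisation and with the constraint of being a bijection of $\Z_+$ in order to pin down an infinite-dimensional measure. The convergence of the geometric series, for which $q<1$ is essential, is what makes the normalisation both finite and unique, and it also rules out measures supported on non-bijective maps. A secondary technical point is the transfer of the transpose symmetry from finite submatrices to the full law $\Q^+$, which requires some care with the weak topology on $\Sym_{\Z_+}$ introduced in Section~\ref{section2}.
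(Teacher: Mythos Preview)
The paper does not actually prove Theorem~\ref{thm1} here; it quotes the result from \cite{Qexch} and only later, in Section~\ref{section4}, recalls the $q$-shuffle construction. So there is no ``paper's own proof'' to compare against except by proxy: the analogous two-sided argument in Section~\ref{section3} (Lemmas~\ref{lemma1}--\ref{lemma4}) and the $q$-shuffle in Section~\ref{section4}. Your existence step via i.i.d.\ geometric variables is exactly the $q$-shuffle algorithm reproduced in Section~\ref{section4}, so that part is fully aligned.

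Your uniqueness argument, however, takes a somewhat different route from what the paper uses for the parallel two-sided statement. You try to pin down directly the conditional law of $\sigma(n+1)$ given $\sigma(1),\dots,\sigma(n)$ by chaining the relations from $\si_{n,n+1},\si_{n+1,n+2},\dots$. This can be made to work, but as written it is vague: the right shift by $\si_{n+1,n+2}$ swaps \emph{positions} $n+1$ and $n+2$, so it does not move you cleanly between the events $\{\sigma(n+1)=b_k\}$ and $\{\sigma(n+1)=b_{k+1}\}$ while keeping the rest fixed. The paper's approach (Lemma~\ref{lemma3}) is cleaner: project to $\Ord(I)$ for finite $I\subset\Z_+$, where the Mallows measure is the unique $q$-exchangeable law, and invoke Kolmogorov to get a unique measure on $\Ord(\Z_+)$; then observe that the map $\Sym_{\Z_+}\to\Ord(\Z_+)$ is injective (since $(\Z_+,<)$ has no nontrivial order-automorphisms), so uniqueness on orders gives uniqueness on permutations. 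This sidesteps the conditional computation entirely.

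For the inversion symmetry $\Si_+\ed\Si_+^{-1}$, your phrase ``finite submatrices of $A(\Si_+)$ carry the transpose symmetry $\inv(\sigma)=\inv(\sigma^{-1})$ from the finite Mallows measure'' needs sharpening: a finite submatrix of $A(\Si_+)$ is typically \emph{not} a permutation matrix, so $\inv(\sigma)=\inv(\sigma^{-1})$ does not apply to it directly. The correct version (implicit in Section~\ref{section7} and carried out for the two-sided case via Corollary~\ref{cor1}) is to approximate $\Q^+$ weakly by the Mallows measures on $\Sym_N$ as $N\to\infty$; each of those is inversion-invariant, the inversion map is a homeomorphism in the weak topology, and the invariance passes to the limit. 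Alternatively, once uniqueness of the right $q$-exchangeable measure is established, inversion-invariance follows immediately, since the pushforward of $\Q^+$ under inversion is left $q$-exchangeable, and running the same uniqueness argument with left and right swapped shows it must coincide with $\Q^+$. Either way your overall plan is sound; just be aware that the two steps you flagged as ``the main obstacle'' and ``a secondary technical point'' are precisely where the sketch is loosest.
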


(Our notation for $\Q^+$ in \cite{Qexch} was $\Q$, but now we reserve $\Q$ for
the two-sided extension.)

Obviously, the same result holds for permutations $\Sym_I$ of any semi-infinite
interval $I\subset\Z$ of the form $I=\Z_{\ge b}$ or $I=\Z_{\le a}$. Following
\cite{Qexch}, we call $\Q^+$ the {\it Mallows measure on\/ $\Sym_{\Z_+}$\/}.
The counterpart for $I=\Z_-$ will be called the {\it Mallows measure on\/
$\Sym_{\Z_-}$\/} and denoted  $\Q^-$.

Now we aim at proving an analog of Theorem \ref{thm1} for permutations of the
whole lattice $\Z$:

\begin{theorem}\label{thm2}
Assume $0<q<1$. On the group $\Sym^\adm\subset\Sym$ the two notions of
$q$-exchangeability coincide. There exists a unique probability measure $\Q$ on
$\Sym^\bal\subset\Sym$ which is both right and left $q$-exchangeable.
\end{theorem}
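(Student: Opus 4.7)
\medskip

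\noindent\textbf{Plan of proof.} The argument splits into three parts: existence of $\Q$, its uniqueness, and the coincidence of the two notions of $q$-exchangeability on $\Sym^\adm$.

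\medskip

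\emph{Existence.} Building on Theorem \ref{thm1}, I take independent random permutations $\Si_+\sim\Q^+$ of $\Z_+$ and $\Si_-\sim\Q^-$ of $\Z_-$, and assemble them into a random bijection $\Si:\Z\to\Z$ by an interlacing recipe (as foreshadowed by the section title) that designates a finite random set in $\Z_+$ whose $\Si_+$-images are exchanged with a finite random set in $\Z_-$. The naive direct gluing $\Si|_{\Z_\pm}:=\Si_\pm$ already produces a balanced permutation, but its law fails $q$-exchangeability across the boundary $i=0$; I therefore enrich the construction with an independent geometric ingredient that controls the number and location of crossings between $\Z_-$ and $\Z_+$. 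Once the explicit model is written down, the defining identity $d\mu_{i,i+1}/d\mu=q^{\sgn(\si(i+1)-\si(i))}$ is checked one elementary transposition at a time and reduces to an elementary computation with ratios of geometric weights.

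\medskip

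\emph{Uniqueness.} Let $\mu$ be any right $q$-exchangeable probability measure on $\Sym^\bal$. For any finite window $[a,b]\subset\Z$, condition on the tuple $(\si(a),\dots,\si(b))$ and apply Theorem \ref{thm1} to the semi-infinite tails $\Sym_{\Z_{\le a-1}}$ and $\Sym_{\Z_{\ge b+1}}$: right $q$-exchangeability forces each conditional tail law to coincide with a suitable translate of $\Q^-$ respectively $\Q^+$. The balance condition $b(\si)=0$ then glues the two tails together uniquely and pins down the joint law of $(\si(a),\dots,\si(b))$. Because finite-window cylinder events generate the Borel $\si$-algebra of the weak topology described at the end of Section \ref{section2}, the measure $\mu$ is determined.

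\medskip

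\emph{Coincidence of left and right $q$-exchangeability on $\Sym^\adm$.} Pushforward under inversion $\si\mapsto\si^{-1}$ interchanges the two notions. Given a right $q$-exchangeable probability $\mu$ on $\Sym^\adm$, decompose $\mu$ along the balance character of Proposition \ref{prop1}; each component is concentrated on a single coset of $\Sym^\bal$ and, after translation by the appropriate shift $s^{(b)}$, lands on $\Sym^\bal$. The uniqueness step identifies each translated component with $\Q$, and the symmetry of the interlacing construction under inversion then gives inversion-invariance of $\mu$, and hence its left $q$-exchangeability.

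\medskip

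\emph{Main obstacle.} The genuinely delicate step is verifying that the interlacing construction produces a random element of $\Sym^\adm$ with balance zero: one must show that $\Si_+$ sends only finitely many points into $\Z_-$, that $\Si_-$ sends only finitely many points into $\Z_+$, and that these two crossing counts match exactly almost surely. Exponential tails of displacements under $\Q^\pm$ (available because $0<q<1$) provide the almost-sure finiteness, but the matching of the two crossing counts is not automatic, and this is precisely where the additional geometric randomness of the interlacing must be designed with care rather than tacked on cosmetically. A secondary, more routine obstacle is turning the finite-window marginal description of the uniqueness step into genuine determination of the Borel measure $\mu$; this is taken care of by the compactness and separability inherent in the $\{0,1\}^{\Z\times\Z}$ representation.
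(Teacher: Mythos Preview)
Your overall three-part strategy (build $\Q$ by interlacing $\Q^+$, $\Q^-$ with an auxiliary random object; prove uniqueness; derive the left/right coincidence from inversion) is exactly the paper's plan. But two of your three steps, as written, do not go through.

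\medskip

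\textbf{Uniqueness.} Conditioning on the window $(\si(a),\dots,\si(b))$ and invoking Theorem~\ref{thm1} on each tail determines only the \emph{conditional} tail laws; it places no constraint whatsoever on the marginal distribution of the window itself. Right $q$-exchangeability inside $[a,b]$ fixes the conditional law of the ordering of $\si(a),\dots,\si(b)$ given the value set $\{\si(a),\dots,\si(b)\}$, but the distribution of that value set remains entirely free in your argument, and the balance condition does not pin it down. The paper avoids this trap by passing to orders: it embeds $\Sym^\bal$ injectively into $\Ord(\Z)$, observes that the $q$-exchangeable probability on $\Ord(\Z)$ is unique (Kolmogorov extension from the finite Mallows measures, Lemma~\ref{lemma3}), and pulls uniqueness back (Lemma~\ref{lemma4}). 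This bypasses any need to compute window marginals.

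\medskip

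\textbf{Existence and the ``main obstacle''.} Your description of the obstacle is based on a misreading of the construction. In the paper's parametrization $\si\leftrightarrow(\si^+,\si^-,\la)$, the permutations $\si^\pm$ are bijections $\Z_\pm\to\Z_\pm$ and send nothing across the boundary; the crossings are encoded entirely by the interlacing word $\epsi$ (equivalently, the Young diagram $\la$), and the very definition \eqref{eq4} of the admissible $\epsi$'s forces the two crossing counts to be equal. Balance is therefore built into the parametrization, not something to be verified probabilistically afterward. The genuine work in the existence half is not admissibility but the verification of \emph{left} $q$-exchangeability (Lemma~\ref{lemma2}): the swap of letters $0$ and $1$ moves all three components $(\si^+,\si^-,\epsi)$ simultaneously, and one must check that the three resulting powers of $q$ combine to $q^{\pm1}$. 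Your plan defers this to ``symmetry of the interlacing construction under inversion'', but that symmetry is not manifest (the coordinates $(\si^+,\si^-,\la)$ do not transform simply under $\si\mapsto\si^{-1}$), and without either a direct proof of left $q$-exchangeability or a separate argument for inversion-invariance, the coincidence claim on $\Sym^\adm$ does not follow.
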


We call $\Q$ the {\it Mallows measure on\/ $\Sym^\bal$}. Before proceeding to
the proof we list some immediate corollaries and comments.

\begin{corollary}\label{cor1}
The measure $\Q$ is invariant under the inversion map $\si\to\si^{-1}$,
\end{corollary}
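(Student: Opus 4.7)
The plan is to use the uniqueness part of Theorem \ref{thm2} and the symmetry between right and left $q$-exchangeability under inversion. Specifically, let $\Q'$ denote the pushforward of $\Q$ under $\si\mapsto\si^{-1}$. The goal is to show $\Q'=\Q$, and to do so it suffices to verify that $\Q'$ satisfies the three defining properties of $\Q$: it is a probability measure on $\Sym^\bal$, it is right $q$-exchangeable, and it is left $q$-exchangeable.

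First I would check that inversion preserves $\Sym^\bal$. Since the matrix $A(\si^{-1})$ is the transpose of $A(\si)$, the block $A(\si^{-1})_{-+}$ is the transpose of $A(\si)_{+-}$, so $\rk(A(\si^{-1})_{-+})=\rk(A(\si)_{+-})$ and symmetrically with the roles swapped. Hence $\si^{-1}\in\Sym^\adm$ iff $\si\in\Sym^\adm$, and $b(\si^{-1})=-b(\si)$; in particular inversion is an involution of $\Sym^\bal$. So $\Q'$ is a probability measure on $\Sym^\bal$.

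Next I would invoke the remark recorded right after Definition \ref{def2}: inversion swaps the right and left versions of $q$-exchangeability. Because $\Q$ is assumed to be both right and left $q$-exchangeable (by Theorem \ref{thm2}), its pushforward $\Q'$ is automatically both left and right $q$-exchangeable. At this point $\Q'$ meets exactly the hypotheses of the uniqueness assertion in Theorem \ref{thm2}, which forces $\Q'=\Q$.

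There is essentially no obstacle here once Theorem \ref{thm2} is in hand; the only subtlety worth a line of justification is the preservation of balance under inversion, to ensure that $\Q'$ really lives on $\Sym^\bal$ so that the uniqueness statement applies. The rest is a direct bookkeeping consequence of the swap of the two sides of the $q$-exchangeability condition under $\si\mapsto\si^{-1}$.
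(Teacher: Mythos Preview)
Your proof is correct and follows essentially the same route as the paper's, which consists of a single sentence: inversion swaps left and right $q$-exchangeability, so the pushforward of $\Q$ under $\si\mapsto\si^{-1}$ is again both right and left $q$-exchangeable, and uniqueness (Theorem~\ref{thm2}) forces it to coincide with $\Q$. Your write-up supplies the extra detail that inversion preserves $\Sym^\bal$ (via $b(\si^{-1})=-b(\si)$), which the paper leaves implicit, but the argument is the same.
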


Indeed, the inversion turns the left $q$-exchangeability into the right
$q$-exchangeability and vice versa.

\begin{corollary}\label{cor2}
Let $\tau:\Z\to\Z$ stand for the reflection with respect to a given
half-integer $n+\frac12$. The measure $\Q$ is invariant under the conjugation
$\si\to\tau\si\tau^{-1}$.
\end{corollary}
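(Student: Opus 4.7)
The plan is to reduce the statement to the uniqueness part of Theorem~\ref{thm2}. Set $c_\tau(\sigma) := \tau\sigma\tau^{-1}$, and let $\tilde\Q := (c_\tau)_*\Q$ be the pushforward. Since $\tau$ is an involution, invariance of $\Q$ under $c_\tau$ is the same as the equality $\tilde\Q = \Q$. To invoke uniqueness in Theorem~\ref{thm2}, it suffices to verify that (i) $\tilde\Q$ is a probability measure on $\Sym^\bal$, and (ii) $\tilde\Q$ is right $q$-exchangeable.

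For (i), I would check that $c_\tau$ preserves $\Sym^\bal$. Conjugation by $\tau$ acts on the permutation matrix $A(\sigma)$ by a simultaneous row and column reflection about the antidiagonal at level $n+\tfrac12$. Computing the balance with respect to the splitting $\Z=\Z_{\le n}\sqcup\Z_{>n}$ (permissible, since $b(\sigma)$ does not depend on the splitting), this reflection swaps the two off-diagonal blocks (up to reversal of rows and columns). Because rank is invariant under reflection, one obtains $b(\tau\sigma\tau^{-1}) = -b(\sigma)$. In particular $c_\tau(\Sym^\bal)=\Sym^\bal$, so $\tilde\Q$ is supported on $\Sym^\bal$.

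For (ii), the two key computations are: first, $\tau^{-1}\sigma_{i,i+1}\tau = \sigma_{j,j+1}$ with $j:=2n-i$, so conjugation by $\tau$ permutes adjacent transpositions; and second, for $\tilde\sigma=\tau\sigma\tau^{-1}$ one has
$$
\tilde\sigma(i+1) - \tilde\sigma(i) \;=\; \sigma(j+1) - \sigma(j),
$$
obtained by direct substitution using $\tilde\sigma(k)=2n+1-\sigma(2n+1-k)$. Consequently, right multiplication of $\tilde\sigma$ by $\sigma_{i,i+1}$ corresponds under $c_\tau$ to right multiplication of $\sigma$ by $\sigma_{j,j+1}$, and the signs appearing in the Radon-Nikod\'ym derivatives match. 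A standard change-of-variables computation then transports the right $q$-exchangeability of $\Q$ to that of $\tilde\Q$, with the correct factor $q^{\sgn(\tilde\sigma(i+1)-\tilde\sigma(i))}$. Uniqueness in Theorem~\ref{thm2} now yields $\tilde\Q=\Q$.

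There is no genuinely hard analytic step; the argument is a soft application of uniqueness. The main potential pitfall is bookkeeping — verifying that the antidiagonal reflection really swaps the off-diagonal blocks (so that balance gets negated, and balanced permutations are preserved), and that the sign of $\sigma(i+1)-\sigma(i)$ is preserved rather than flipped under $c_\tau$. Both of these work out because $\tau$ reverses the order on both positions and values, and these two reversals cancel.
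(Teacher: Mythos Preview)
Your argument is correct and is exactly the intended one: the paper states Corollary~\ref{cor2} as an immediate consequence of Theorem~\ref{thm2} without writing out a proof, and the uniqueness-based reasoning you give (conjugation by $\tau$ preserves $\Sym^\bal$ and right $q$-exchangeability, hence the pushforward coincides with $\Q$) is precisely what makes it ``immediate.'' Your bookkeeping checks---that $b(\tau\sigma\tau^{-1})=-b(\sigma)$, that $\tau\sigma_{i,i+1}\tau^{-1}=\sigma_{2n-i,2n-i+1}$, and that $\tilde\sigma(i+1)-\tilde\sigma(i)=\sigma(j+1)-\sigma(j)$---are all right.
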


Applying the shift transformations $\si\to s^{(b)}\si$ derives  from $\Q$ a family
$\{\Q^{(b)}\}_{b\in\Z}$ of $q$-exchangeable measures with pairwise disjoint
supports, which leave on the larger subgroup $\Sym^\adm\subset\Sym$.

\begin{corollary}\label{cor3}
Each $q$-exchangeable probability measure on $\Sym$ is a unique convex  mixture
of the measures $\Q^{(b)}$ over $b\in\Z$.
\end{corollary}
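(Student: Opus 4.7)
The plan is to disintegrate an arbitrary $q$-exchangeable probability measure $\mu$ on $\Sym$ along the balance statistic, and then identify each conditional piece with some $\Q^{(k)}$ by means of Theorem~\ref{thm2}.

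First, I would prove that $\mu$ is concentrated on $\Sym^\adm$, so that $b(\cdot)$ is $\mu$-a.s.\ well defined. Theorem~\ref{thm2} takes its domain as given, and nothing in its statement makes admissibility automatic; I expect this to be the main obstacle. The natural attack is to use $q<1$ together with $q$-exchangeability to obtain tightness for the displacements $\si(i)-i$ at each fixed $i$, and to deduce from this that $\rk(A_{-+})$ and $\rk(A_{+-})$ are $\mu$-a.s.\ finite.

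Once $\mu(\Sym^\adm)=1$ is in hand, I would set $p_k := \mu\{\si:\, b(\si)=k\}$ and let $\mu^{(k)}$ be the conditional measure on this level set, so that $\mu=\sum_{k\in\Z} p_k\,\mu^{(k)}$. Since every elementary transposition $\si_{i,i+1}$ lies in $\Sym^\fin\subset\Sym^\bal$, Proposition~\ref{prop1}(ii) gives $b(\si\si_{i,i+1})=b(\si_{i,i+1}\si)=b(\si)$, so the right and left transformations by $\si_{i,i+1}$ preserve each level set. Consequently each $\mu^{(k)}$ inherits both notions of $q$-exchangeability from $\mu$.

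Next, I would transport $\mu^{(k)}$ to $\Sym^\bal$ via the bijection $\si\mapsto s^{(-k)}\si$ from $\{b(\si)=k\}$ onto $\Sym^\bal$. Because $(s^{(-k)}\si)(i)=\si(i)-k$ and $(s^{(-k)}\si)^{-1}(i)=\si^{-1}(i+k)$, the signs of the relevant adjacent differences are preserved; moreover the identity $\si_{i,i+1}s^{(-k)}=s^{(-k)}\si_{i+k,i+k+1}$ converts left transpositions to left transpositions. A routine check then shows that the pushforward is both right and left $q$-exchangeable on $\Sym^\bal$; Theorem~\ref{thm2} identifies it with $\Q$, and pulling back yields $\mu^{(k)}=\Q^{(k)}$. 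Finally, uniqueness of the mixing weights is immediate because the $\Q^{(k)}$ have pairwise disjoint supports $\{b(\si)=k\}$, so $p_k$ is recovered from $\mu$ as the mass of the corresponding level set.
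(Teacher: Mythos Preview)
Your disintegration-along-balance scheme (Steps~2--5) is exactly what the paper intends; the corollary is stated there without proof, as an immediate consequence of Theorem~\ref{thm2}, and your handling of the shift transport and of uniqueness via the disjoint supports $\{b(\si)=k\}$ is correct. The only real issue is Step~1, establishing $\mu(\Sym^\adm)=1$. Your proposed route via tightness of the displacements $\si(i)-i$ does not work as stated: $q$-exchangeability does not control the displacement law, since under $\Q^{(b)}$ the law of $\si(i)-i$ is that of $D-b$ with $D$ as in Theorem~\ref{thm4}, so a mixture with arbitrary weights $(p_b)$ produces arbitrary tails for $\si(i)-i$; and in any case there is no direct passage from displacement bounds to finiteness of $\rk(A_{\pm\mp})$.

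There are two clean fixes. The paper's implicit route goes through orders: the map $\si\mapsto\prec$ of \eqref{eq20} sends any left $q$-exchangeable probability measure on $\Sym$ to a $q$-exchangeable measure on $\Ord(\Z)$, which by Lemma~\ref{lemma3} must equal $\mu_{\Z,q}$; but $\Q$ also pushes forward to $\mu_{\Z,q}$, so $\mu_{\Z,q}$ is concentrated on the image of $\Sym^\bal$, whose preimage in $\Sym$ (the fibers being shift-orbits) is exactly $\Sym^\adm$. Alternatively, argue directly with inversion counts rather than displacements: right $q$-exchangeability forces the pattern of $(\si(i),\dots,\si(i+n))$ to be Mallows on $\Sym_{n+1}$, hence $\#\{i<j\le i+n:\si(j)<\si(i)\}$ is truncated geometric; letting $n\to\infty$ monotonically gives $r_i<\infty$ a.s., and the mirror argument on $(\si(i-n),\dots,\si(i))$ gives $\ell_i<\infty$, whence $\si\in\Sym^\adm$ a.s.\ by Proposition~\ref{prop2}. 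Either substitution makes your proof complete. (Minor slip: the conjugation identity should read $\si_{i,i+1}s^{(-k)}=s^{(-k)}\si_{i-k,\,i-k+1}$ and correspondingly $(s^{(-k)}\si)^{-1}(i)=\si^{-1}(i-k)$; this does not affect your conclusion.)
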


Thus, on the group $\Sym$ the uniqueness property is broken. The
reason is that the symmetry group $\operatorname{Aut}(\Z,<)$ of the ordered set
$(\Z,<)$ is the nontrivial group of shifts. However, the uniqueness is resurrected
if we  factorize the group $\Sym$ modulo the subgroup
$\operatorname{Aut}(\Z,<)$ (no matter, on the left or on the right). Finitary
permutations still act on this quotient space from  both sides, and there is
again a unique $q$-exchangeable probability measure which is
the push-forward of $\Q$.

The concept of $q$-exchangeability makes sense also for $q>1$,  but it is
easily reduced to $\bar{q}$-exchangeability with $\bar{q}=1/q\in (0,1)$ by
passing from $\sigma$ to permutation $i\to-\sigma(i)$.

\medskip

The plan of the proof of Theorem \ref{thm2} is the following.  We construct
$\Q$ from the Mallows measures on $\Z_+$ and $\Z_-$, and a random interlacing
pattern encoding how to fit the one-sided infinite extensions together. The
group $\Sym^\bal$ contains $\Sym_{\Z_+}\times\Sym_{\Z_-}$ as a subgroup, and
there is a natural bijection between the quotient set
$(\Sym_{\Z_+}\times\Sym_{\Z_-})\backslash \Sym^\bal$ and the set $\Y$ of Young
diagrams, which leads to a parametrization
\begin{equation}\label{eq1}
\Sym^\bal\ni\si\,\longleftrightarrow\,(\si^+,\si^-,\la)\in
\Sym_{\Z_+}\times\Sym_{\Z_-}\times\Y.
\end{equation}
In these coordinates we define $\Q$ as a product measure
\begin{equation}\label{eq2}
\Q:=\Q^+\otimes\Q^-\otimes {\mathcal P},
\end{equation}
where ${\mathcal P}$ is a probability measure on $\Y$ specified below in
\eqref{eq3}. A simple argument shows that $\Q$ is right $q$-exchangeable (Lemma
\ref{lemma1}). Next, we prove that $\Q$ is also left $q$-exchangeable, which is
somewhat more complicated (Lemma \ref{lemma2}). Note that these steps rely
heavily on the $q$-exchangeability property of the measures $\Q^\pm$
established in \cite{Qexch}. Finally, we verify the uniqueness claim (Lemma
\ref{lemma3} and Lemma \ref{lemma4}), which is again an easy exercise.

\begin{proof}[Proof of Theorem \ref{thm2}]
We proceed with the construction of $\Q$. Given a permutation
$\si\in\Sym^\bal$, we represent it by the two-sided infinite permutation word
$$
w=(\dots w_{-1}w_0 w_1 w_2 \dots)=(\dots\si(-1)\si(0)\si(1)\si(2)\dots),
$$
from which we derive a binary word
$$
\epsi=(\dots\epsi_{-1}\epsi_0\epsi_1\epsi_2\dots)\in\{-1,+1\}^\Z,
$$
where
$$
\epsi_i=\begin{cases} +1, & \si(i)\in\Z_+, \\ -1, & \si(i)\in\Z_-.\end{cases}
$$
The binary word $\epsi$ encodes the way in which positive and negative entries
of $w$ interlace. Obviously, $\epsi$ is a full invariant of the coset
$(\Sym_{\Z_+}\times\Sym_{\Z_-})\si$. The set $E$ of interlacing patterns that
stem from balanced permutations is characterized by the condition
\begin{equation}\label{eq4}
\#\{i\in\Z_-: \epsi_i=+1\}=\#\{i\in\Z_+: \epsi_i=-1\}<\infty.
\end{equation}

A well-known fact, often used in combinatorics, is that binary words $\epsi\in E$
can be conveniently encoded into Young diagrams $\la=(\la_1,\la_2,\dots)$ in
the following way. Split $\Z$ into the disjoint union of the sets of positions occupied by
$-1$'s and $+1$'s, respectively:
\begin{equation}\label{eq9}
\Z=\si^{-1}(\Z_-)\sqcup\si^{-1}(\Z_+)=\{\dots<j_{-2}<j_{-1}<j_0\}\sqcup\{i_1<i_2<i_3<\dots\}.
\end{equation}
Then $\la$ is defined by
\begin{equation}\label{eq13}
\la_1=1-i_1,\quad \la_2=2-i_2, \quad \la_3=3-i_3, \dots
\end{equation}
Here we identify Young diagrams and partitions, as in \cite{Ma}. Note also that
$$
\la'_1=j_0, \quad \la'_2=j_{-1}+1, \quad \la'_3=j_{-2}+2, \dots
$$
where $\la'=(\la'_1,\la'_2,\dots)$ is the transposed diagram; this is seen from
\cite[Chapter I, Equation (1.7)]{Ma}.

We define $\Q$ by \eqref{eq2}, taking for $\mathcal P$  the  probability
distribution on $\Y$ associated with Euler's partition generating function:
\begin{equation}\label{eq3}
{\mathcal P}(\la)=\const^{-1} q^{|\la|}, \qquad \la\in\Y, \quad
\const=\sum_{\la\in\Y}q^{|\la|}=\prod_{k=1}^\infty\left(\frac1{1-q^k}\right),
\end{equation}
where $|\la|=\la_1+\la_2+\dots$ is the number of boxes in the diagram $\la$.
Note that  $|\la|=|\la'|$ is  equal to the number of inversions in the
binary word $\epsi$, which is
$$
\#\{(i,j)\,:\, i<j, \, \epsi_i=+1,\,  \epsi_j=-1\}.
$$

Together with permutations $\si^\pm\in\Sym_{\Z_\pm}$ we will also consider the
corresponding permutation words $w^\pm$. Here
\begin{equation}\label{eq10}
w^+:=(\si^+(1)\si^+(2)\si^+(3)\dots)=(w_{i_1}w_{i_2}w_{i_3}\dots)
\end{equation}
is a permutation of $\Z_+$ while
\begin{equation}\label{eq11}
w^-:=(\dots\si^-(-2)\si^-(-1)\si^-(0))=(\dots w_{j_{-2}} w_{j_{-1}}w_{j_0})
\end{equation}
is a permutation of $\Z_-$.

To recover $w$ from the triple $(w^+,w^-,\epsi)$, one has to replace each $+1$
entry of the word $\epsi$  left-to-right with the successive entries of $w^+$,
and replace $-1$'s right-to-left with the entries of $w^-$.

\begin{lemma}\label{lemma1}
$\Q$ is right $q$-exchangeable.
\end{lemma}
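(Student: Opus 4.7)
The plan is to verify the Radon--Nikod\'ym identity coordinate-wise in the parametrization \eqref{eq1}. Right multiplication by $\si_{i,i+1}$ acts on the permutation word $w=(\dots w_{-1}w_0w_1\dots)$ by swapping the adjacent letters $w_i$ and $w_{i+1}$; its effect on $(w^+,w^-,\epsi)$, and hence on the triple $(\si^+,\si^-,\la)$, then splits into four cases according to the pair $(\epsi_i,\epsi_{i+1})$.

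If $\epsi_i=\epsi_{i+1}=+1$, then $w_i$ and $w_{i+1}$ are consecutive letters of $w^+$, while $w^-$ and $\epsi$ (and hence $\la$) are unaffected. I will invoke the right $q$-exchangeability of $\Q^+$ from Theorem \ref{thm1} to conclude that the Radon--Nikod\'ym factor equals $q^{\sgn(w_{i+1}-w_i)}$. The case $\epsi_i=\epsi_{i+1}=-1$ is symmetric and uses $\Q^-$. In the mixed cases $\epsi_i\ne\epsi_{i+1}$, swapping a positive letter with a negative one does not alter the rank of either letter within its own positive or negative subsequence, so both $w^+$ and $w^-$ are fixed and only $\epsi$ changes, by exchanging its values at positions $i$ and $i+1$.

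The key book-keeping step is to track the change in $|\la|$ in these mixed cases. Since $|\la|$ equals the number of inversions $\#\{k<l:\epsi_k=+1,\,\epsi_l=-1\}$ of $\epsi$, a short count shows that the contributions from pairs $(k,i)$ and $(k,i+1)$ with $k<i$, and from pairs $(i,l)$ and $(i+1,l)$ with $l>i+1$, cancel in pairs; only the pair $(i,i+1)$ itself contributes a net change. Thus $|\la|$ increases by $1$ if $(\epsi_i,\epsi_{i+1})=(-1,+1)$, in which case $\sgn(w_{i+1}-w_i)=+1$ since $w_i<0<w_{i+1}$, and decreases by $1$ if $(\epsi_i,\epsi_{i+1})=(+1,-1)$, in which case $\sgn(w_{i+1}-w_i)=-1$. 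In both mixed cases the Radon--Nikod\'ym derivative of $\mathcal P$ with respect to the induced transformation therefore equals $q^{\sgn(w_{i+1}-w_i)}$, and the product structure $\Q=\Q^+\otimes\Q^-\otimes\mathcal P$ yields the required identity $d\Q_{i,i+1}/d\Q=q^{\sgn(\si(i+1)-\si(i))}$. The only non-mechanical step is the inversion-count cancellation; everything else reduces directly to the known right $q$-exchangeability of $\Q^\pm$.
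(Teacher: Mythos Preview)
Your proof is correct and follows essentially the same approach as the paper's: split into cases according to $(\epsi_i,\epsi_{i+1})$, reduce the equal-sign cases to the right $q$-exchangeability of $\Q^\pm$, and handle the mixed cases via the change in the inversion count of $\epsi$ (equivalently $|\la|$). Your explicit cancellation argument for the pairs $(k,i),(k,i+1),(i,l),(i+1,l)$ just spells out what the paper asserts in one line; the only cosmetic slip is that $w_i\le 0$ rather than $w_i<0$, which does not affect the conclusion.
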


\begin{proof}
Fix $i\in\Z$ and examine the behavior of $\Q$ under the right shift
$\si\to\si\si_{i,i+1}$. Encode $\si$ (or rather the corresponding word $w$) by
the triple $(w^+,w^-,\epsi)$. Depending on the $\epsi$-component,
there are three possible cases: $\epsi_i=\epsi_{i+1}=+1$,
$\epsi_i=\epsi_{i+1}=-1$, and $\epsi_i\ne\epsi_{i+1}$.

In the first case, both $w^-$ and $\epsi$ remain intact, and the transformation
$\si\to\si\si_{i,i+1}$ reduces to swapping two adjacent letters in $w^+$, whose
positions depend only on $\epsi$. Then the desired transformation property of
$\Q$ follows from the fact that measure $\Q^+$ is right $q$-exchangeable.

This kind of the argument also works in the second case, with appeal to the
similar property of $\Q^-$.

Finally, in the third case, the transformation affects only the binary word
$\epsi$ and amounts to swapping of two distinct adjacent letters.
This  changes the total number of inversions in $\epsi$ by $\pm1$. Then the desired
transformation property follows from the very definition of distribution $\mathcal P$.
\end{proof}

\begin{lemma}\label{lemma2}
$\Q$ is left $q$-exchangeable.
\end{lemma}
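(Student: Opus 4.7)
The plan is to reduce the left multiplication $\si\to\si_{i,i+1}\si$ to a right multiplication by a finitary permutation and invoke Lemma \ref{lemma1}. The key identity is
$$
\si_{i,i+1}\si=\si\cdot\tau_{a,b},\qquad a:=\si^{-1}(i),\ b:=\si^{-1}(i+1),
$$
where $\tau_{a,b}\in\Sym^\fin$ is the transposition of the positions $a$ and $b$ (obtained directly as $\tau_{a,b}=\si^{-1}\si_{i,i+1}\si$). Since right $q$-exchangeability of $\Q$ (Lemma \ref{lemma1}) extends by composition from elementary transpositions to arbitrary elements of $\Sym^\fin$, it suffices to compute the Radon--Nikod\'ym derivative of the pushforward $\si\to\si\tau_{a,b}$ from a convenient decomposition of $\tau_{a,b}$ into adjacent transpositions.

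Assume $a<b$ (the case $a>b$ is symmetric). I would use the standard reduced word
$$
\tau_{a,b}=\si_{a,a+1}\si_{a+1,a+2}\cdots\si_{b-1,b}\cdot\si_{b-2,b-1}\cdots\si_{a,a+1},
$$
a product of $2(b-a)-1$ elementary transpositions. Tracking the values through the successive right multiplications, in the first phase the value $i$ is shuttled rightward from position $a$ to position $b$, passing over the values $\si(a+1),\dots,\si(b-1),\si(b)=i+1$ and contributing factors $q^{\sgn(\si(k)-i)}$ for $a<k<b$, together with $q^{\sgn((i+1)-i)}=q$ at the last swap. In the second phase the value $i+1$ is shuttled leftward from position $b-1$ back to position $a$, passing over the same intermediate values in reverse order and contributing $q^{\sgn((i+1)-\si(k))}$ for $a<k<b$. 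The overall factor is therefore
$$
q\cdot\prod_{k=a+1}^{b-1}q^{\,\sgn(\si(k)-i)+\sgn((i+1)-\si(k))}.
$$

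The main obstacle is the bookkeeping of which values occupy which positions after each of the $2(b-a)-1$ swaps, but once in place the computation collapses via the elementary identity
$$
\sgn(v-i)+\sgn((i+1)-v)=0,\qquad v\in\Z\setminus\{i,i+1\},
$$
which expresses that any integer $v\ne i,i+1$ lies on the same side of both $i$ and $i+1$. All the intermediate factors cancel, leaving $q=q^{\sgn(b-a)}$, which is precisely the required Radon--Nikod\'ym derivative $q^{\sgn(\si^{-1}(i+1)-\si^{-1}(i))}$. The argument is uniform in $i\in\Z$ and requires no separate treatment of the boundary case $i=0$, in contrast with the three cases that arose in the proof of Lemma \ref{lemma1}; beyond Lemma \ref{lemma1} itself it does not rely on the product structure $\Q=\Q^+\otimes\Q^-\otimes\mathcal P$.
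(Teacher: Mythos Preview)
Your proof is correct and takes a genuinely different route from the paper's. The paper works directly with the product decomposition $\Q=\Q^+\otimes\Q^-\otimes\mathcal P$ and treats three cases according to whether $\{i,i+1\}\subset\Z_+$, $\{i,i+1\}\subset\Z_-$, or $\{i,i+1\}=\{0,1\}$; the first two reduce to the \emph{left} $q$-exchangeability of $\Q^\pm$ (imported from \cite{Qexch}), while the boundary case $\{0,1\}$ requires a rather intricate bookkeeping of how swapping the letters $0$ and $1$ simultaneously transforms all three components $w^+,w^-,\epsi$. Your argument bypasses the product structure entirely: the conjugation identity $\si_{i,i+1}\si=\si\tau_{a,b}$ converts the problem into an iterated application of Lemma~\ref{lemma1}, and the telescoping identity $\sgn(v-i)+\sgn((i+1)-v)=0$ handles all $i\in\Z$ uniformly. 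This is shorter and more conceptual; it also makes transparent that left $q$-exchangeability is a \emph{formal} consequence of the right version for any measure on $\Sym$, not a special feature of $\Q$.

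One point deserves to be made explicit. Since $a=\si^{-1}(i)$ and $b=\si^{-1}(i+1)$ depend on $\si$, the left shift $\si\to\si_{i,i+1}\si$ is not a single right translation, so Lemma~\ref{lemma1} does not apply to it directly. Your pointwise factor computation is correct, but to convert it into a Radon--Nikod\'ym statement one should partition $\Sym^\bal$ into the countably many cylinder sets $\Omega_{a,b}=\{\si:\si^{-1}(i)=a,\ \si^{-1}(i+1)=b\}$, observe that the left shift maps $\Omega_{a,b}$ bijectively onto $\Omega_{b,a}$ and coincides there with the fixed right translation by $\tau_{a,b}$, and then apply the iterated right $q$-exchangeability on each piece. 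This is routine, but worth a sentence.
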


\begin{proof}
Let $W$ denote the set of all words $w$ corresponding to permutations
$\si\in\Sym^\bal$. As above, we represent $W$ as the direct product $W^+\times
W^-\times E$, where $W^\pm$ is the set of words $w^\pm$ corresponding to
permutations $\si^\pm\in\Sym_{\Z_\pm}$ and $E$ is the set of binary words
satisfying \eqref{eq4}.

A qualitative difference between the right shift $\si\to \si\si_{i,i+1}$ and
the left shift $\si\to \si_{i,i+1}\si$ is that the former acts on the set of positions
while the latter acts on the set of entries of word $w$.

Fix $i\in\Z$ and examine three possible cases: $(i,i+1)\subset\Z_+$,
$(i,i+1)\subset\Z_-$, and $(i,i+1)=(0,1)$.

In the first case, swapping letters $i$ and $i+1$ in $w$ reduces to swapping
the same letters in $w^+$, the components $w^-$ and $\epsi$ remaining intact.
Then we may use the fact that measure $\Q^+$ is left $q$-exchangeable.

The same argument is applicable in the second case.

In the third case, the transformation is more delicate, as it affects all
three components $w^+$, $w^-$, and $\epsi$. To describe it in detail we need to
introduce some notation.

Denote by $\varphi:W\to W$ the transformation in question (swapping
$0\leftrightarrow1$). We write $W=W^{10}\sqcup W^{01}$, where the subset
$W^{10}\subset W$ consists of the words in which $1$ is on the left of 0, and
its complement $W^{01}\subset W$ comprises the words in which 0 is on the left
of 1. Next, we consider finer splittings
$$
W^{10}=\bigsqcup_{p,k,r,l\ge0} W^{10}(p,k;r,l), \qquad
W^{01}=\bigsqcup_{p,k,r,l\ge0} W^{01}(p,k;r,l),
$$
according to the following constraint on the $\epsi$-component of $w$.

$\bullet$ Subset $W^{10}(p,k;r,l)$: on the left of the position of letter 1,
the number of $+1$'s in $\epsi$ equals $p$; between the positions of letters 1
and 0, the number of $+1$'s and $-1$'s is $k$ and $l$, respectively; on the
right of 0, the number of $-1$'s equals $r$.

$\bullet$ Subset $W^{01}(p,k;r,l)$: the same conditions, with letters 0 and 1
interchanged.

Obviously, $\varphi$ maps $W^{10}(p,k;r,l)$ onto $W^{01}(p,k;r,l)$ and vice
versa.

We need an extra notation: For $a\ge0$, $W^+_a\subset W^+$ consists of the
words $w^+$ in which letter 1 occupies position $a+1$ counting from the left.
Likewise, for $b\ge0$, $W^-_b\subset W^-$ comprises the words $w^-$ in which
letter 0 occupies position $b+1$ counting from the right.  Then
$$
W^{10}(p,k;r,l)=W^+_p\times W^-_r\times E^{10}(p,k;r,l)
$$
where $E^{10}(p,k;r,l)$ is some subset in $E$. Likewise,
$$
W^{01}(p,k;r,l)=W^+_{p+k}\times W^-_{r+l}\times E^{01}(p,k;r,l)
$$
where $E^{01}(p,k;r,l)$ is some subset in $E$.

Now, the key fact is that, in this notation, the bijection
$\varphi:W^{10}(p,k;r,l)\to W^{01}(p,k;r,l)$ factors through a triple of
transformations
$$
\varphi^+: W^+_p\to W^+_{p+k}, \quad \varphi^-:W^-_r\to W^-_{r+l}, \quad
\widetilde\varphi: E^{10}(p,k;r,l)\to E^{01}(p,k;r,l),
$$
where $\varphi^+$ moves letter 1 in $w^+$ to $k$ positions on the right,
$\varphi^-$ moves letter 0 in $w^-$ to $l$ positions on the left, and
$\widetilde\varphi$ is a transformation that does not depend on the $w^\pm$
components.

By the virtue  of $q$-exchangeability of measures $\Q^\pm$, $\varphi^+$
produces the factor $q^k$ on every $w^+\in W^+_p$, and $\varphi^-$ produces
factor $q^l$ on every $w^-\in W^-_l$. On the other hand, $\widetilde\varphi$
diminishes the total number of inversions in every binary word $\epsi\in
E^{10}(p,k;r,l)$ by $k+l+1$ and so produces the factor $q^{-k-l-1}$. Therefore,
the resulting effect of swapping $10\to01$ is the constant factor $q^{-1}$, as
it should be.

Likewise, the bijection $\varphi:W^{01}(p,k;r,l)\to W^{10}(p,k;r,l)$ produces
the desired factor $q$. This concludes the proof of the lemma.
\end{proof}

To address the issue of uniqueness with shall change a viewpoint and interpret
permutation as order. This is almost tautological for finite permutations, but
requires some care for permutations of infinite sets, because by far not every
order corresponds to a permutation.

Formally, by an {\it order\/} on set $X$ we understand a strict total order,
which is a binary relation $x\prec y$ on $X$ satisfying three conditions:
$x\prec y$ implies $x\ne y$ (the order is strict), $x\prec y$ and $y\prec z$
implies $x\prec z$ (transitivity), if $x\ne y$ then either $x\prec y$ or
$y\prec x$ (completeness).

Let $X$ be a finite or countable set, and let $\Ord(X)$ be the set of all
orders on $X$. The group $\Sym_X$ of all permutations of $X$ acts  on $X\times
X$, hence acts on $\Ord(X)$, provided we identify the order with  its graph.
The identification also enables us to topologize $\Ord(X)$ by viewing it as a
subset of $\{0,1\}^{X\times X}$. The group $\Sym_X$ acts on $\Ord(X)$ by
homeomorphisms.

Orders can be restricted from larger sets to smaller, hence for $Y\subset X$
there is a natural projection map $\Ord(X)\to\Ord(Y)$.  For $X$ countable the
space $\Ord(X)$ can be identified with the projective limit
$\varprojlim\Ord(Y)$, where  $Y$ ranges over finite subsets of $X$.

Let $I$ be an interval in $\Z$, possibly coinciding with $\Z$ itself. Every
permutation $\si\in\Sym_I$ determines an order $\prec$ on $I$ by setting
\begin{equation}\label{eq20}
i\prec j \quad  \textrm{iff} \quad \si^{-1}(i)<\si^{-1}(j).
\end{equation}
Equivalently, writing $\si$ as permutation word with positions labelled by $I$,
$i\prec j$ means that letter $i$ occurs in $w$ before letter $j$. In this way,
we get a natural map $\Sym_I\to\Ord(I)$ that intertwines the left action of
$\Sym_I$ on itself with its canonical action on $\Ord(I)$. For $I\ne\Z$ this
map is an embedding while for $I=\Z$ it is not. The reason is that the order
set $(\Z,<)$ has a nontrivial group of symmetries formed by shifts, which
implies that two permutations $\si,\tau\in\Sym_\Z=\Sym$ induce the same order
if and only if $\si=s^{(b)}\tau$ with some $b\in\Z$. In the case $I\ne\Z$  this
effect disappears, for then the ordered set $(I,<)$ has only trivial
automorphisms.

If $I$ is finite, the map $\Sym_I\to \Ord(I)$ is a bijection. But for $I=\Z$ or
a semi-infinite interval $I=Z_{\geq a},\Z_{\leq a}$ the orders coming from
permutations constitute a very small subset in $\Ord(I)$.

For arbitrary interval $I\subseteq\Z$ and any $q>0$, the notion of a
$q$-exchangeable measure on $\Ord(I)$ is introduced in complete analogy with
Definition \ref{def2}, only now we do not need to distinguish between right and
left actions of transpositions, since the action is unique.

\begin{lemma}\label{lemma3}
For every interval $I\subseteq\Z$ and  $q>0$, there exists a unique
$q$-exchangeable probability measure $\mu_{I,q}$ on\/ $\Ord(I)$.
\end{lemma}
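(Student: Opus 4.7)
The plan is to reduce the infinite case to the finite case via projections, using the identification $\Ord(I) = \varprojlim_{J} \Ord(J)$, where $J$ ranges over finite sub-intervals $[a,b] \subset I$; these form a cofinal family among all finite subsets of $I$. On any finite interval $J$ the bijection $\Sym_J \to \Ord(J)$ together with the discussion following Definition~\ref{def2} already provides a unique $q$-exchangeable probability measure, namely the Mallows measure of parameter $q$. Existence on infinite intervals will be extracted from the measures constructed in Theorems~\ref{thm1} and \ref{thm2}.

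For uniqueness, let $\mu$ be any $q$-exchangeable probability measure on $\Ord(I)$ and let $\pi_J \colon \Ord(I) \to \Ord(J)$ be the restriction map for a finite subinterval $J \subset I$. For $i, i+1 \in J$, the swap $s_{i,i+1}$ acts on both $\Ord(I)$ and $\Ord(J)$, and $\pi_J$ intertwines these actions. The crucial observation is that the Radon--Nikod\'ym factor $q^{\pm 1}$ produced by this swap depends only on the relative order of $i$ and $i+1$ in the configuration, a datum preserved by $\pi_J$. Hence $(\pi_J)_*\mu$ is a $q$-exchangeable probability measure on the finite set $\Ord(J)$ and must coincide with Mallows. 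Since a Borel measure on $\Ord(I)$ is determined by its finite-dimensional projections and finite intervals are cofinal, $\mu$ is uniquely determined.

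For existence we distinguish three cases. If $I$ is finite, the Mallows measure on $\Ord(I) \cong \Sym_{|I|}$ works. If $I$ is semi-infinite, relabel $I$ as $\Z_+$ or $\Z_-$ and push forward $\Q^+$ or $\Q^-$ along the injective map $\Sym_I \hookrightarrow \Ord(I)$ from \eqref{eq20}. If $I = \Z$, push forward $\Q$ from $\Sym^\bal$ to $\Ord(\Z)$; this map is injective on $\Sym^\bal$ because, by Proposition~\ref{prop1}(ii), no nontrivial shift $s^{(b)}$ is balanced. In all three cases the left $q$-exchangeability of the underlying measure on permutations transfers directly to $q$-exchangeability of the pushforward on $\Ord(I)$: the left action of $\sigma_{i,i+1}$ swaps the values $i$ and $i+1$ in every permutation word, which is exactly the action of $s_{i,i+1}$ on orders.

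The main point to verify is that the projection to a finite subinterval sends a $q$-exchangeable measure to a $q$-exchangeable measure; once granted, everything else is formal. What makes this work is the \emph{local} nature of the Radon--Nikod\'ym factor: it sees only the relative position of $i$ and $i+1$ in the configuration, information that is retained by every projection $\pi_J$ as soon as $\{i,i+1\}\subset J$.
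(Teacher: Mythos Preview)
Your uniqueness argument coincides with the paper's: both show that any $q$-exchangeable measure on $\Ord(I)$ projects to the unique finite Mallows measure on each $\Ord(J)$ for finite $J\subset I$, and a measure on the projective limit is determined by these projections.

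Your existence argument, however, does not cover the full range $q>0$ asserted in the lemma. The measures $\Q^\pm$ and $\Q$ from Theorems~\ref{thm1} and~\ref{thm2} are only available for $0<q<1$. For $q>1$ one could salvage things via the reflection $\sigma\mapsto(i\mapsto -\sigma(i))$ mentioned after Corollary~\ref{cor3}, but for $q=1$ there is no repair: as Remark~\ref{rem1} spells out, the exchangeable measure $\mu_{\Z,1}$ on $\Ord(\Z)$ produces an order of type $(\mathbb Q,<)$ almost surely and is \emph{not} supported by permutations at all, so there is nothing to push forward.

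The paper handles existence uniformly in $q$ by Kolmogorov's extension theorem: the finite Mallows measures $\mu_{J,q}$ exist for every $q>0$ and are compatible under the restriction maps $\Ord(J)\to\Ord(J')$, hence extend to a measure on $\Ord(I)=\varprojlim_J\Ord(J)$. Besides covering all $q>0$, this keeps Lemma~\ref{lemma3} independent of the construction of $\Q$; your route, while not strictly circular (the definition of $\Q$ and Lemmas~\ref{lemma1}--\ref{lemma2} do precede Lemma~\ref{lemma3} in the proof of Theorem~\ref{thm2}), leans on machinery the lemma is meant to help justify.
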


\begin{proof}
For finite $I$ this is a property of the Mallows measure \eqref{MalFin}, since
$(I,<)$ is isomorphic to $(\{1,\dots,n\},<)$, for $n=\#I$.

Assume now $I=\Z$. The space $\Ord(\Z)$ coincides with the projective limit
space $\varprojlim\Ord(I)$, where $I$ ranges over the set of finite intervals.
For two finite intervals $I\subset J$, the restriction map $\Ord(J)\to\Ord(I)$
is consistent with $q$-exchangeability and so maps $\mu_{J,q}$ to $\mu_{I,q}$.
Appealing to Kolmogorov's extension theorem shows that $\mu_{\Z,q}$ exists and
is unique.

For semi-infinite interval $I\subset \Z$ the argument is exactly the same.
\end{proof}

\begin{lemma}\label{lemma4}
There exists at most one left $q$-exchangeable probability measure on
$\Sym^\bal$.
\end{lemma}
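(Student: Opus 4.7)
My plan is to reduce the uniqueness claim on $\Sym^\bal$ to the corresponding uniqueness on $\Ord(\Z)$ furnished by Lemma \ref{lemma3}. The bridge is the map
$$
\pi: \Sym^\bal \to \Ord(\Z), \qquad \si \mapsto \prec_\si,
$$
where $\prec_\si$ is the order defined by \eqref{eq20}.

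The first step is to check that $\pi$ is injective. The discussion preceding Lemma \ref{lemma3} shows that the fiber of $\pi$ over any order in all of $\Sym$ is a single coset of the shift group $\{s^{(b)}\}_{b \in \Z}$. Since the balance is an additive character of $\Sym^\adm$ (Proposition \ref{prop1}) and $b(s^{(b)}) = b$, at most one representative of any such coset can have balance zero, so $\pi|_{\Sym^\bal}$ is injective.

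The second step is to transport the left $q$-exchangeability structure. By the equivariance of $\pi$ noted in the text, the left action $\si \to \sigma_{i,i+1}\si$ on $\Sym$ intertwines with the canonical action on $\Ord(\Z)$ that swaps $i$ and $i+1$ as elements. Moreover, the Radon-Nikod\'ym weight $q^{\sgn(\si^{-1}(i+1) - \si^{-1}(i))}$ of Definition \ref{def2} depends only on $\pi(\si)$: it equals $q$ when $i \prec_\si i+1$ and $q^{-1}$ when $i+1 \prec_\si i$. Consequently, if $\mu$ is left $q$-exchangeable on $\Sym^\bal$, then $\pi_*\mu$ is a $q$-exchangeable probability measure on $\Ord(\Z)$ in the sense of Lemma \ref{lemma3}, and therefore $\pi_*\mu = \mu_{\Z,q}$ is uniquely determined.

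To conclude, I must recover $\mu$ from its pushforward. Since both $\Sym^\bal$ and $\Ord(\Z)$ are standard Borel spaces (Borel subsets of the Polish space $\{0,1\}^{\Z \times \Z}$) and $\pi$ is Borel and injective, a classical result (Lusin-Suslin) yields that $\pi(\Sym^\bal)$ is a Borel subset of $\Ord(\Z)$ and that $\pi$ is a Borel isomorphism onto it. This forces $\mu$ to be uniquely determined by $\pi_*\mu$, completing the proof. The step I expect to require the most care is this last one---turning uniqueness downstream on $\Ord(\Z)$ back into uniqueness on $\Sym^\bal$---but once the standard Borel machinery is invoked it reduces to routine measure theory.
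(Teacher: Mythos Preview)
Your proof is correct and follows essentially the same route as the paper: embed $\Sym^\bal$ into $\Ord(\Z)$ via \eqref{eq20}, use injectivity together with the standard Borel structure (Lusin--Suslin) to identify $\Sym^\bal$ with a Borel subset of $\Ord(\Z)$, and then invoke Lemma~\ref{lemma3}. You supply more detail than the paper does---in particular spelling out why $\pi$ is injective via Proposition~\ref{prop1} and why the Radon--Nikod\'ym cocycle factors through $\pi$---but the argument is the same.
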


\begin{proof}
Restricting the map $\Sym\to\Ord(\Z)$ to $\Sym^\bal$ we get an embedding
$\Sym^\bal\to\Ord(\Z)$. The latter map is continuous and hence Borel, and the
image of $\Sym^\bal$ is a Borel subset in $\Ord(\Z)$ (the latter claim follows,
e.g., from the fact that both $\Sym^\bal$ and $\Ord(\Z)$ are standard Borel
spaces). Therefore, two distinct left $q$-exchangeable probability measures on
$\Sym^\bal$ would give rise to two distinct $q$-exchangeable probability
measures on $\Ord(\Z)$, which is impossible by the virtue of Lemma \ref{lemma3}.
\end{proof}

The proof of Theorem \ref{thm2} is thus completed.
\end{proof}

\begin{remark}\label{rem1}
We have seen that the Mallows measure $\Q$ on $\Sym^\bal$ is obtainable from
the $q$-exchangeable measure $\mu_{\Z,q}$ on the space $\Ord(\Z)$. However, the
latter measure exists for every $q>1$ while $\Q$ is defined only for $0<q<1$.
The obvious explanation is that for $q\ge1$ the measure $\mu_{\Z,q}$ is no
longer supported by $\Sym^\bal\subset\Ord(Z)$.

For $q>1$, $\mu_{\Z,q}$ is still supported by
permutations. Namely, by non-balanced permutations of the type
$\tau\si$, where
$\si$ ranges over $\Sym^\bal$ and $\tau\in\Sym$ is the reflection map
$\tau(i)=-i$.

For $q=1$, the measure $\mu_{\Z,q}=\mu_{\Z,1}$ is the only exchangeable
probability measure on $\Ord(\Z)$. In this case, the  order structure on
$\Z$ plays no role, we simply regard $\Z$ as a countable set. Following a
well-known recipe, the random exchangeable order can be defined by declaring
$i\prec j$ iff $\xi_i<\xi_j$, where $\xi_i$'s are independent, uniform $[0,1]$
random variables. It follows that the type of the exchangeable order is almost
surely $({\mathbb Q}, <)$. Remarkably, for $q\ne1$ the situation is different,
in that the $q$-exchangeable order is of the type $(\Z,<)$.
\end{remark}

\section{A construction from independent geometric variables}
\label{section4}

Fix $q\in(0,1)$ and let $\Si$ denote the random $q$-exchangeable balanced
permutation of $\Z$, i.e. the random element of $\Sym^\bal$ distributed
according to Mallows' measure $\Q$ with parameter $q$. Likewise, let $\Si_+$
and $\Si_n$ denote the random $q$-exchangeable permutations of $\Z_+$ and
$\{1,\dots,n\}$, respectively. There is a simple algorithm to construct $\Si_n$
from independent truncated geometric variables.  In \cite{Qexch}, we described
a ``one-sided infinite'' extension of this algorithm, called {\it
$q$-shuffle\/}, to generate $\Si_+$. As an application of the $q$-shuffle, we
obtained an isomorphism of the measure space $(\Sym_{\Z_+},\Q^+)$ with the
space $\{0,1,2,\dots\}^{\Z_+}$ equipped with a product of geometric
distributions. The aim of this section is to derive a ``two-sided infinite''
analog of these results: we shall introduce an algorithm of generating $\Si$
from  infinitely many copies of a geometric  random variable. In comparison
with the interlacing construction, one advantage of the new approach is that it
makes transparent some stationarity property of $\Q$.

For $\si\in\Sym$, a pair of positions $(i,j)\in\Z\times\Z$ is an {\it
inversion\/} in $\sigma$ if $i<j$ and $\sigma(i)>\sigma(j)$. If $(i,j)$ is an
inversion, we say that it is a {\it left inversion\/} for $j$, and a {\it right
inversion\/} for $i$. Introduce the  counts of left and right inversions,
$$
\ell_i: =\#\{j: j<i,~\sigma(j)>\sigma(i) \}, \quad r_i:=\#\{j:
j>i,~\sigma(j)<\sigma( i)\}, \qquad i\in\Z,
$$
respectively (of course, for the general $\si\in\Sym$ these quantities may be
infinite). The following easy proposition relates these notions to the
admissibility condition.

\begin{proposition}\label{prop2}
If $\si$ is admissible, then $\ell_{i},\,r_{i}<\infty$  for all $i\in \Z$.
Conversely, if this condition holds for some index $i\in\Z$, then it holds for
all $i$ and $\si$ is admissible.
\end{proposition}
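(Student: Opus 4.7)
The plan is to reinterpret $\ell_i$ and $r_i$ as numbers of $1$'s in off-diagonal blocks of $A(\si)$ for a \emph{shifted} splitting of $\Z$, and then to exploit the fact that finiteness of such block counts is invariant under finite shifts of the cutoff.

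For any $a,m\in\Z$ I introduce the quadrant counts
$$
N^{-+}(a,m):=\#\{j>a:\si(j)\le m\},\qquad N^{+-}(a,m):=\#\{j\le a:\si(j)>m\},
$$
which are the numbers of $1$'s in the off-diagonal blocks of $A(\si)$ relative to splitting the columns at $a$ and the rows at $m$. By definition, $\si$ is admissible iff $N^{-+}(0,0)$ and $N^{+-}(0,0)$ are both finite. A direct reading of the definitions gives the identifications
$$
\ell_i=N^{+-}(i-1,\si(i)),\qquad r_i=N^{-+}(i,\si(i)-1),
$$
so $\ell_i$ and $r_i$ are themselves off-diagonal block counts, for the splittings centered at $(i-1,\si(i))$ and $(i,\si(i)-1)$ respectively.

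The second ingredient is a shift stability: increasing $a$ by $1$ moves a single column across the partition, and increasing $m$ by $1$ moves a single $1$ of the matrix across it, so each of $N^{-+}$ and $N^{+-}$ changes by at most $1$ under a unit shift of either cutoff. Iterating, for any two cutoffs $(a,m)$ and $(a',m')$
$$
|N^{-+}(a,m)-N^{-+}(a',m')|\le |a-a'|+|m-m'|,
$$
and similarly for $N^{+-}$. Consequently, the simultaneous finiteness of $(N^{-+},N^{+-})$ at one cutoff is equivalent to their simultaneous finiteness at every cutoff.

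Combining the two observations gives the proposition at once. If $\si$ is admissible, then the block counts are finite at $(0,0)$, hence at every $(a,m)$, so $\ell_i$ and $r_i$ are finite for every $i\in\Z$. Conversely, if $\ell_{i_0},r_{i_0}<\infty$ for some single $i_0$, then the block counts are finite at $(i_0-1,\si(i_0))$ and $(i_0,\si(i_0)-1)$, hence by the Lipschitz bound also at $(0,0)$, so $\si$ is admissible; the first half then yields $\ell_j,r_j<\infty$ for all $j\in\Z$. The only delicate point in the whole argument is bookkeeping the off-by-one shifts relating $\ell_i, r_i$ to the correct block counts; everything else is routine.
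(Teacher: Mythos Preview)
Your proof is correct and follows essentially the same approach as the paper: interpret $\ell_i$ and $r_i$ as the counts of $1$'s in a ``lower-left'' and ``upper-right'' corner of $A(\si)$, and observe that any two such corner counts differ by a finite amount. You have made this last step more precise than the paper does, quantifying it as a Lipschitz bound with constant $1$ in each cutoff, whereas the paper simply asserts that the difference is finite.
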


\begin{proof}
Consider the permutation matrix $A=A(\si)$. We assume that its rows  are
enumerated from top to bottom, and the columns from left to right. Then
$\ell_i$ equals the number of 1's in the lower left corner of $A$ formed by
positions $(a,b)$ satisfying inequalities $a>\si(i)$, $b<i$. Likewise, $r_i$ is
the number of 1's in the upper right corner determined by the opposite
inequalities $a<\si(i)$, $b>i$. For any two lower left corners, the
corresponding numbers of 1's always differ by a finite quantity, and the same
holds for upper right corners. This makes the claim of the proposition evident.
\end{proof}

Obviously,
\begin{equation}\label{inv1}
\inv(\sigma)=\sum_{i\in \Z}\ell_i=\sum_{i\in \Z}r_i
\end{equation}
is the total number of inversions in $\sigma$; for balanced permutations this
quantity is finite if and only if $\si\in\Sym^\fin$.

The left and right inversion counts are defined similarly for finite
permutations. A well-known fact is that a permutation $\si\in\Sym_n$ can be
uniquely encoded in the sequence of its right inversion counts
$$
(r_1,\dots,r_n)\in \{0,\dots,n-1\}\times\dots\times\{0,1\}\times\{0\}.
$$
For instance, the permutation word $(3,1,2,4)$ (i.e. permutation in the one-row
notation) corresponds to the sequence $(2,0,0,0)$.
Similarly, $\si\in\Sym_n$ can be uniquely encoded in the sequence of its left
inversion counts
$$
(\ell_1,\dots,\ell_n)\in\{0\}\times\{0,1\}\times\dots\times\{0,\dots,n-1\}.
$$
The correspondence inverse to the latter amounts to the following well-known algorithm:

\medskip
\noindent {\bf Elimination algorithm}: 1) Given a sequence $(r_1,\dots,r_n)$,
construct a permutation word $w=w_1\dots w_n$ recursively, as follows. At step
1 set $w_1=r_1+1$, and eliminate integer $r_1+1$ from the list $1,\dots,n$. At
step 2 let $w_2$ be equal to the $(r_2+1)$th smallest entry of the reduced
list, and eliminate the entry from the list, etc. For instance, from
$(2,0,0,0)$ we derive $w_1=2+1=3$, and remove $3$ from the initial list
$1,2,3,4$ to obtain $1,2,4$. At step 2 we find $w_2=1$ (which is the $(0+1)$st
smallest element $1,2,4$) and further reduce the list to $2,4$, etc.

2) Likewise, one can construct  $w$ from $(\ell_1,\dots,\ell_n)$. In this case,
we first determine $w_n$ from $\ell_n$, then find $w_{n-1}$, etc.

\medskip

For $1\leq i\leq n$ let $L_{n,i}$ and $R_{n,i}$  be the random counts of left
and right  inversions in the Mallows permutation  $\Sigma_n$. It is immediate
from the  above bijections and \eqref{MalFin} that
\begin{itemize}
\item[(i)]  $R_{n,i}$  are independent for $i=1,\dots,n$,
and have the truncated geometric distributions
$$
\Prob(R_{n,i}=k)=\frac{q^{k}}{[n-i]_q}, \qquad k=0,\dots,n-i,
$$
\item[(ii)]
$L_{n,i}$  are independent for $i=1,\dots,n$,
and have the truncated geometric distributions
$$
\Prob(L_{n,i}=k)=\frac{q^{k}}{[i]_q}, \qquad k=0,\dots,i-1,
$$
where $[m]_q:= \sum_{k=0}^{m-1} q^k$ denotes the $q$-number.
\end{itemize}
\medskip

Sending $n\to\infty$ in (i) leads to the $q$-shuffle algorithm \cite{Qexch}
mentioned above:

\medskip

\noindent{\bf The $q$-shuffle algorithm}. Let $R_1,R_2,\dots$ be independent,
identically distributed random variables with the geometric distribution on
$\{0,1,\dots\}$ with parameter $q$,
$$
\Prob(R_i=k)=(1-q)q^{k}, \qquad k=0,1,2,\dots, \quad i=1,2,\dots\,.
$$
Set $w_1=R_1+1$. Inductively, choose for $w_i$ the $(R_i+1)$th smallest element
of $\Z_+\setminus\{w_1,\dots,w_{i-1}\}$. Eventually every element of $\Z_+$
will be chosen at some step, because $R_i=0$ for infinitely many $i$ almost
surely. Therefore the  procedure yields almost surely a word $w=w_1w_2\dots$
corresponding to a permutation $\si\in\Sym_{\Z_+}$. Note that its right
inversion counts are just $R_1,R_2,\dots$.

\medskip

\begin{remark}
One can prove that the left inversion counts of $\Sigma_+$ are independent
random variables with truncated geometric distributions, as in (ii). However,
generation of $\Si_+$ through the left inversion counts is a more difficult
task, for a direct extension of the second version of the elimination algorithm
no longer works: the right-most letter does not exist. A similar difficulty
arises with two-sided infinite words, since for them there is no qualitative
difference between right and left inversion counts. To resolve this difficulty
we will exploit below a more sophisticated  procedure.
\end{remark}

Consider the product space $X:=\{0,1,2,\dots\}^\Z$ and equip it with the
product measure $\G:=\bigotimes_{i\in\Z}G$, where $G$ stands for the geometric
distribution on $\{0,1,2,\dots\}$ with a fixed parameter $q\in(0,1)$, that is,
\begin{equation}\label{eq23}
G(n)=(1-q)q^n, \qquad n=0,1,2,\dots\,.
\end{equation}
Let $\psi:\Sym^\bal\to X$ be the map defined by the right inversion counts:
$\psi(\si)=(r_i: i\in\Z)$.

\begin{theorem}\label{thm3}
The map $\psi$ provides an isomorphism of the measure space $(\Sym^\bal,\Q)$
onto the product measure space $(X,\G)$.
\end{theorem}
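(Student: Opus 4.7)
The map $\psi$ is clearly Borel (each $r_i$ is a countable sum of indicators of Borel events $\{\si(j)<\si(i)\}$, taking finite values on $\Sym^\bal$ by Proposition~\ref{prop2}). Since $\Sym^\bal$ and $X$ are standard Borel spaces, it suffices to establish (A)~$\psi_*\Q=\G$ and (B)~$\psi$ is injective on a full-$\Q$-measure subset of $\Sym^\bal$. The Lusin--Suslin theorem---a Borel injection between standard Borel spaces is a Borel isomorphism onto its (Borel) image---then supplies a measurable inverse on a full-$\G$-measure subset, yielding the desired isomorphism of measure spaces.

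For (A), I would use a truncation argument exploiting $q$-exchangeability. Fix $N\ge 1$ and let $\pi^{(N)}\in\Sym_{2N+1}$ denote the relative order of $(\Si(-N),\dots,\Si(N))$; by $q$-exchangeability of $\Q$ on the interval $[-N,N]$ together with the uniqueness of the finite Mallows measure (Theorem~\ref{thm1} applied to the finite interval), $\pi^{(N)}$ is Mallows$(q)$-distributed on $\Sym_{2N+1}$. Its right inversion counts
\[
R_i^{(N)} \;:=\; \#\{j : i < j \le N,\; \Si(j) < \Si(i)\}, \qquad -N\le i\le N,
\]
are therefore jointly independent with $R_i^{(N)}$ truncated geometric on $\{0,\dots,N-i\}$, by property (i) before the $q$-shuffle algorithm. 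By Proposition~\ref{prop2}, $R_i<\infty$ almost surely, so $R_i^{(N)}\nearrow R_i$ monotonically as $N\to\infty$. For any fixed window $|i|\le M$, each truncated marginal converges to $G$, independence is preserved under weak convergence of product measures, and the pointwise convergence $R_i^{(N)}\to R_i$ then identifies the joint law of $(R_i)_{|i|\le M}$ as $G^{\otimes(2M+1)}$. Letting $M\to\infty$ gives $\psi_*\Q=\G$.

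For (B), which I anticipate being the main obstacle, the plan is to build a measurable inverse to $\psi$ on a full-$\G$-measure subset of $X$. Since $\Prob(R_i=0)=1-q>0$, Borel--Cantelli yields that $\G$-a.e.\ sequence $(r_i)$ has $r_i=0$ infinitely often in both directions of $\Z$. At each such anchor position $i$, admissibility forces $\si(i)=\min\{\si(j):j\ge i\}$, a minimum that exists because $\{\si(j):j\ge i\}$ is bounded below for admissible $\si$. Using nested anchors spreading to $\pm\infty$, one should recover the relative order of $(\si(j))_{j\in\Z}$ from $(r_j)$ by applying the finite elimination algorithm to expanding inter-anchor windows, provided the external contributions to the within-window inversion counts can be cleanly bounded by the anchor structure. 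The balance condition $b(\si)=0$ then uniquely determines $\si$ within the shift-orbit of its induced order, via the injective embedding $\Sym^\bal\hookrightarrow\Ord(\Z)$ noted in the discussion preceding Lemma~\ref{lemma3} (shifts modify the balance by \eqref{eq5}). I expect the technical heart of the argument to be precisely this measurable disentanglement of within-window and external contributions to the $r_j$'s, for $\G$-typical input, relying on the probabilistic genericity of geometric variables.
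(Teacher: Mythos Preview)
Your argument for (A) is correct and takes a somewhat different route than the paper. The paper's proof of $\psi_*\Q=\G$ proceeds in two steps: first it establishes that the sequence $(R_i)$ is strictly stationary (Lemma~\ref{lemma5}, via invariance of $\Q$ under conjugation by shifts $s^{(n)}$), and then it computes the joint law of $(R_{i_1+n},\dots,R_{i_k+n})$ in the limit $n\to+\infty$ by conditioning on the interlacing pattern $\epsi$ and appealing to the $q$-shuffle description of $w^+$ (Lemma~\ref{lemma6}). Your truncation argument---pull back to finite intervals, use that the induced permutation is finite Mallows, read off independence of the truncated right-inversion counts, and pass to the limit---is closer in spirit to the material in Section~\ref{section7} (Proposition~\ref{propopo}) and avoids the interlacing decomposition altogether. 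Both arguments are clean; yours is perhaps more elementary, while the paper's highlights the role of stationarity.

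For (B), however, your proposal is only a sketch, and the paper's treatment is both simpler and stronger. You aim to prove injectivity on a full-$\Q$-measure set by reconstructing the induced order via ``anchor'' positions and finite elimination on windows, and you flag the disentanglement of within-window versus external contributions as the technical heart. In fact this concern is largely self-imposed: since $r_j$ only counts positions $k>j$, the one-sided elimination algorithm applied to $(r_j)_{j\ge a}$ for any $a$ recovers the relative order of $(\si(j))_{j\ge a}$ directly, with no contamination from the left; there is no need for finite windows. More importantly, the paper proves that $\psi$ is injective \emph{deterministically on all of $\Sym^\bal$}, not merely almost surely. It does so via two explicit formulas: Lemma~\ref{lemma7} gives $\si(i)=i+r_i-\ell_i$ for balanced $\si$, and Lemma~\ref{lemma8} reconstructs each $\ell_j$ from $(r_i)_{i\le j}$ by the recursion $r_j^{(j)}=r_j$, $r_j^{(i-1)}=r_j^{(i)}+\mathbf1(r_j^{(i)}\ge r_{i-1})$, followed by $\ell_j=\sum_{i<j}\mathbf1(r_j^{(i)}<r_i)$. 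This yields an explicit Borel inverse defined on $\psi(\Sym^\bal)$, making the anchor/order-then-balance reconstruction unnecessary.
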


\noindent{\bf Comment.} Before proceeding to the proof, note that under the
correspondence $\psi:\si\to (r_i)$, the alternative ``either $w_i<w_{i+1}$ or
$w_i>w_{i+1}$'' for the word $w\leftrightarrow\si$ translates as ``either
$r_i\le r_{i+1}$ or $r_i>r_{i+1}$", and the transformation $T_{i,i+1}:\si\to
\si\si_{i,i+1}$ turns into the transformation $T'_{i,i+1}: X\to X$ that does
not affect coordinates $r_j$ with $j\ne i,\,i+1$ and reduces to
$$
(r_i,r_{i+1}) \to \begin{cases} (r_{i+1}+1,\, r_i), & r_i\le r_{i+1}\\
(r_{i+1},\,r_i-1), & r_i>r_{i+1}.
\end{cases}
$$
Thus, $\G$ is transformed under the action of $T'_{i,i+1}$ in the same way as
$\Q$ is transformed under $T_{i,i+1}$, i.e. the Radon-Nikodym derivative equals
$q^{\pm1}$, depending on whether $r_i\le r_{i+1}$ or $r_i>r_{i+1}$. This
observation agrees with the claim of the theorem but is not yet enough for the
proof.
\medskip

\begin{proof}
Regard $(r_i)$ as a two-sided infinite sequence of random variables defined on
the probability space $(\Sym^\bal,\Q)$.

\begin{lemma}\label{lemma5}
The random sequence $(r_i)$ is stationary with respect to shifts $i\to i+n$.
\end{lemma}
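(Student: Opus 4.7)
The plan is to reduce stationarity of $(r_i)$ to invariance of $\Q$ under the conjugation map $T_n\colon\si\mapsto s^{(n)}\si s^{(-n)}$ for each $n\in\Z$, and then to establish this invariance by invoking the uniqueness part of Theorem \ref{thm2}. The point is that, computationally, $T_n$ acts on the right inversion counts as the shift $i\mapsto i+n$; and structurally, conjugation by $s^{(n)}$ has a good chance of preserving the $q$-exchangeability property that characterizes $\Q$.

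The steps, in order, would be as follows. First, observe that $T_n$ maps $\Sym^\bal$ into itself: by the additivity of the balance (Proposition \ref{prop1}(ii)), $b(s^{(n)}\si s^{(-n)})=n+b(\si)-n=b(\si)$. Second, show that the pushforward $(T_n)_*\Q$ is again right and left $q$-exchangeable on $\Sym^\bal$. The key elementary identity is $s^{(-n)}\si_{i,i+1}s^{(n)}=\si_{i+n,i+n+1}$, so that right multiplication of $\rho=T_n(\si)$ by $\si_{i,i+1}$ corresponds under $T_n^{-1}$ to right multiplication of $\si$ by $\si_{i+n,i+n+1}$. Using the formula $\rho(j)=\si(j+n)-n$, the $q$-exchangeability factor of $\Q$ at positions $(i+n,i+n+1)$, namely $q^{\sgn(\si(i+n+1)-\si(i+n))}$, becomes $q^{\sgn(\rho(i+1)-\rho(i))}$, which is exactly what right $q$-exchangeability of $(T_n)_*\Q$ requires; the left-sided version is handled symmetrically.

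Third, apply the uniqueness statement in Theorem \ref{thm2} to conclude $(T_n)_*\Q=\Q$. Fourth, verify by direct inspection that
$$
r_i(T_n(\si))=\#\{j>i:\si(j+n)-n<\si(i+n)-n\}=r_{i+n}(\si).
$$
Thus, if $\Si\sim\Q$, then $T_n(\Si)\sim\Q$ as well, and the pointwise identity $(r_i(T_n(\Si)))_{i\in\Z}=(r_{i+n}(\Si))_{i\in\Z}$ yields that $(r_{i+n}(\Si))_{i\in\Z}$ and $(r_i(\Si))_{i\in\Z}$ have the same law, which is stationarity.

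The main obstacle I anticipate is the bookkeeping in the second step, namely carefully verifying that the Radon-Nikod\'ym factor of $(T_n)_*\Q$ matches the one required by Definition \ref{def2}; the other three steps reduce to one-line checks, and the idea to invoke uniqueness is essentially forced once one notices that $T_n$ sends $\Sym^\bal$ to itself.
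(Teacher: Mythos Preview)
Your proposal is correct and follows essentially the same route as the paper: both use the conjugation $\si\mapsto s^{(n)}\si s^{(-n)}$, observe that it preserves $\Sym^\bal$ and $q$-exchangeability, invoke the uniqueness clause of Theorem \ref{thm2} to get $\Q$-invariance, and then translate this into stationarity via the identity $r_{i+n}(\si)=r_i(s^{(n)}\si s^{(-n)})$. The paper's proof is a terse one-paragraph version of yours; your added bookkeeping in step two simply makes explicit what the paper leaves to the reader.
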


\begin{proof}
Use for a moment a more detailed notation $r_i(\si)$. We have
$$
r_{i+n}(\si)=r_i(s^{(n)}\si s^{(-n)}), \qquad n\in\Z.
$$
On the other hand, the map $\si\to s^{(n)}\si s^{(-n)}$ leaves invariant the
subgroup $\Sym^\bal\subset\Sym^\adm$ and preserves the $q$-exchangeability
property. Due to uniqueness of measure $\Q$, it is invariant under this map,
whence the assertion of  the lemma.
\end{proof}

\begin{lemma}\label{lemma6}
For every finite sequence of integers $i_1<\dots<i_k$, the joint law of
$(r_{i_1},\dots,r_{i_k})$ is the product measure $G\otimes\dots\otimes G$.
\end{lemma}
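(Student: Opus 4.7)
The plan is to reduce the claim to the corresponding, already established fact for the one-sided Mallows measure $\Q^+$ on $\Sym_{\Z_+}$, using two earlier results in the paper: the stationarity of $(r_i)$ proved in Lemma \ref{lemma5} and, more importantly, the uniqueness supplied by Lemma \ref{lemma3}.

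First, by Lemma \ref{lemma5} I may shift all indices by a common integer, so it suffices to treat the case $i_1 \geq 1$. Next, I would observe that for any $i \geq 1$ the count $r_i(\si) = \#\{j > i : \si(j) < \si(i)\}$ involves only positions in $\Z_{\geq 1}$ and in fact depends only on the strict total order $\prec_\si$ on $\Z_{\geq 1}$ defined by $a \prec_\si b \iff \si(a) < \si(b)$; indeed $r_i(\si) = \#\{j > i : j \prec_\si i\}$. So the joint law of $(r_{i_1}, \dots, r_{i_k})$ is determined by the push-forward measure on $\Ord(\Z_{\geq 1})$ induced by $\si \mapsto \prec_\si$.

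The key step is to identify this push-forward with $\mu_{\Z_{\geq 1}, q}$, the unique $q$-exchangeable measure on $\Ord(\Z_{\geq 1})$ supplied by Lemma \ref{lemma3}. For this, the right $q$-exchangeability of $\Q$, applied to an elementary transposition $\si_{a, a+1}$ with $a \geq 1$, translates directly into $q$-exchangeability of the push-forward on $\Ord(\Z_{\geq 1})$: the induced action on the order simply swaps the labels $a$ and $a+1$, and the Radon--Nikodym factor $q^{\sgn(\si(a+1) - \si(a))}$ is precisely that demanded by the definition. By Lemma \ref{lemma3} the push-forward coincides with $\mu_{\Z_+, q}$. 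The same argument applied to the measure space $(\Sym_{\Z_+}, \Q^+)$ shows that $\si^+ \mapsto \prec_{\si^+}$ also pushes $\Q^+$ forward to $\mu_{\Z_+, q}$.

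Hence the joint distribution of $(r_{i_1}, \dots, r_{i_k})$ under $\Q$ agrees with the joint law of the analogous functionals $(r_{i_1}(\si^+), \dots, r_{i_k}(\si^+))$ under $\Q^+$. By the $q$-shuffle result recalled earlier in this section (from \cite{Qexch}), the latter are iid $G$, which yields the claim. The only point I expect to require some care is verifying that the restriction of the $q$-exchangeability action from $\Sym^\bal$ to the push-forward on $\Ord(\Z_{\geq 1})$ is clean; but since an elementary transposition $\si_{a, a+1}$ with $a \geq 1$ fixes every position in $\Z_{\leq 0}$, the restricted order $\prec_\si|_{\Z_{\geq 1}}$ transforms in the expected way, so this is essentially a bookkeeping check.
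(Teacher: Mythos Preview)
Your argument is correct, and it takes a genuinely different route from the paper's proof. The paper does not invoke Lemma~\ref{lemma3}; instead it combines stationarity with the explicit interlacing decomposition $\si\leftrightarrow(w^+,w^-,\epsi)$: conditioning on $\epsi$, for all sufficiently large $n$ the positions $i_1+n,\dots,i_k+n$ lie entirely in the ``$+$'' block of $\epsi$, so the right inversion counts there coincide with right inversion counts of $w^+$ at $k$ consecutive-gap positions, whose joint law is $G^{\otimes k}$ by the $q$-shuffle. The unconditional law is then obtained by integrating out $\epsi$.

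Your approach bypasses this conditional-limit step entirely: after shifting into $\Z_+$, you identify the push-forward of $\Q$ under $\si\mapsto{\prec_\si}|_{\Z_+}$ with the unique $q$-exchangeable measure $\mu_{\Z_+,q}$ on $\Ord(\Z_+)$, and do the same for $\Q^+$, thereby matching the two distributions of $(r_{i_1},\dots,r_{i_k})$ at once. This is cleaner and more structural, exploiting the uniqueness result (Lemma~\ref{lemma3}) that the paper proved but did not use here. The only point to make explicit is that the Radon--Nikod\'ym factor $q^{\sgn(\si(a+1)-\si(a))}$ from right $q$-exchangeability is a function of the restricted order $\prec_\si|_{\Z_+}$ alone (namely $q^{\pm1}$ according as $a\prec_\si a+1$ or not), so that the density indeed passes to the push-forward; you note this, and it is straightforward. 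The paper's method, by contrast, stays closer to the concrete product construction~\eqref{eq2} and avoids any appeal to the order space.
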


\begin{proof}
By virtue of Lemma \ref{lemma5}, it suffices to prove that, as $n\to+\infty$,
the limit law for $(r_{i_1+n},\dots,r_{i_k+n})$ exists and coincides with
$G\otimes\dots\otimes G$.

Use the encoding $\si\leftrightarrow w\leftrightarrow(w^+,w^-,\epsi)$. Given
$\epsi$, the conditional distribution of $(r_{i_1+n},\dots,r_{i_k+n})$
stabilizes as $n\to+\infty$ and coincides with $G\otimes\dots\otimes G$.
Indeed, this immediately follows from the definition of the encoding and the
$q$-shuffle algorithm generating $w^+$. This implies the desired claim.
\end{proof}

By virtue of Lemma \ref{lemma6}, it suffices to prove that the map $\psi$ is
injective. We do this in the next two lemmas: Lemma \ref{lemma7} says how to
reconstruct a balanced permutation from the two sequences $(r_i)$ and
$(\ell_i)$, and Lemma \ref{lemma8} describes an algorithm which expresses
$(\ell_i)$ through $(r_i)$.

\begin{lemma}\label{lemma7}
For permutation $\sigma\in\Sym^\bal$ we have
\begin{equation}\label{eq7}
\sigma(i)=i+r_i-\ell_i, \qquad i\in\Z.
\end{equation}
\end{lemma}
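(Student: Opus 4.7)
The plan is to compute $\#\{j \in [-N, N] : \sigma(j) \leq \sigma(i)\}$ in two different ways for $N$ sufficiently large, and then equate. Since $\sigma$ is a bijection and $\sigma(j) = \sigma(i)$ only at $j = i$, I first rewrite
\[
r_i = \#\{j > i : \sigma(j) \leq \sigma(i)\}, \qquad \ell_i = \#\{j \leq i : \sigma(j) > \sigma(i)\},
\]
both of which are finite by Proposition \ref{prop2}. I then choose $N$ so large that the interval $[-N, N]$ contains $i$ together with every index counted by $r_i$ or $\ell_i$.

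The \emph{geometric} count splits $[-N, N]$ at the position $i$. The segment $[-N, i-1]$ has $N + i$ elements, of which exactly $\ell_i$ satisfy $\sigma(j) > \sigma(i)$; the segment $[i+1, N]$ contributes the $r_i$ indices with $\sigma(j) < \sigma(i)$; and the single position $j = i$ contributes $1$. Hence
\[
\#\{j \in [-N, N] : \sigma(j) \leq \sigma(i)\} = (N + i - \ell_i) + 1 + r_i.
\]

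The \emph{numerical} count instead measures the same cardinality by invoking the balance. For $N$ large, splitting $[-N, N] = [-N, 0] \sqcup [1, N]$ and using the definition $b(\sigma) = \rk(A_{-+}) - \rk(A_{+-})$ yields
\[
\#\{j \in [-N, N] : \sigma(j) \leq 0\} = (N + 1) + b(\sigma);
\]
shifting the threshold from $0$ to $\sigma(i)$ adjusts this count by exactly $\sigma(i)$, because the finitely many $j$'s whose $\sigma(j)$ lies strictly between the two thresholds all fall inside $[-N, N]$ once $N$ is large enough. Since $\sigma$ is balanced, $b(\sigma) = 0$, so the count becomes $N + \sigma(i) + 1$.

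Equating the two answers gives $\sigma(i) = i + r_i - \ell_i$. The only step that genuinely uses the hypothesis $\sigma \in \Sym^{\bal}$ is the numerical count; the remainder is pure bookkeeping. The main (mild) obstacle is making that numerical count rigorous, which amounts to exactly the same edge-correction argument already illustrated for shift permutations in Section \ref{section2}.
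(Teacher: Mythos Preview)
Your proof is correct. Both arguments actually establish the more general identity $r_i-\ell_i+i-\sigma(i)=b(\sigma)$ for admissible $\sigma$, then specialize to $b(\sigma)=0$; the difference is in how this identity is reached. The paper proceeds by reduction: it observes that both sides are shifted by $n$ under $\sigma\mapsto s^{(n)}\sigma$, chooses $n=\sigma(i)-i$ so that the new permutation fixes $i$, and then reads off $b=r_i-\ell_i$ directly from the definition of balance. Your approach is instead a direct double count of $\#\{j\in[-N,N]:\sigma(j)\le\sigma(i)\}$, with the balance entering through the off-diagonal block ranks. Your route is perhaps more self-contained (no appeal to \eqref{eq5} or to the shift-invariance of inversion counts), at the cost of tracking a few more finite-$N$ edge corrections; the paper's reduction is slicker but leans on the preparatory facts about shifts. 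Either way the substance is the same elementary bookkeeping.
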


\begin{proof}
It is convenient to prove a slightly more general claim: for each
$\si\in\Sym^\adm$
\begin{equation}\label{eq6}
r_i-\ell_i+i-\si(i)=b(\si), \qquad i\in\Z
\end{equation}
(recall that $b(\si)$ denotes the balance of $\si$). Since $\bal(\si)=0$ for
$\si\in\Sym^\bal$, \eqref{eq6} will imply \eqref{eq7}.
Indeed, by \eqref{eq5}, for each $n\in\Z$ we have
$$
b(s^{(n)}\si)=b(\si)+n, \qquad \si\in\Sym^\adm.
$$
On the other hand, if we replace $\si$ by $s^{(n)}\si$, then all counts $r_i$,
$\ell_i$ will not change, while $i-\si(i)$ will transform to $i-\si(i)+n$. Thus,
\eqref{eq6} is consistent with the replacement $\si\to s^{(n)}\si$. Now  fix an
arbitrary $i$, take $n=\si(i)-i$, and replace $\si$ by $\si':=s^{(n)}\si$. Then
$\si'(i)=i$, which in turn implies that $b(\si')=r_i-\ell_i$, by the very
definition of balance and the left and right inversion counts. Consequently,
\eqref{eq6} holds true for $\si'$.
\end{proof}

Fix $\si\in\Sym^\bal$ and let, as usual, $w$ stand for the corresponding
two-sided infinite permutation word. Observe that $r_j+1$ is the rank of $w_j$
in the left-truncated word $w_jw_{j+1}\dots$, with respect to the canonical
order on $\Z$. (That is, the rank equals $k$ if $w_j$ is the $k$th minimal element
among $w_j$, $w_{j+1}$, \dots\,.) More generally, for $i\le j$, define
$r^{(i)}_j$ to be one smaller the rank of $w_j$ among $w_i\dots w_j\dots$. Evidently,
$r^{(j)}_j=r_j$.

\begin{lemma}\label{lemma8}
{\rm(i)} Fix $j\in\Z$. For  $i\le j$, the quantity $r^{(i)}_j$ can be
determined from the finite sequence $r_j, r_{j-1},\dots,r_i$ by means of the
recursion
\begin{equation}\label{eq21}
r^{(j)}_j=r_j, \qquad r^{(i-1)}_j=r^{(i)}_j+\mathbf1\left(r^{(i)}_j\ge
r_{i-1}\right).
\end{equation}
{\rm(ii)} The left inversion counts can be determined from the quantities
$\left(r^{(i)}_j: i\le j\right)$  by
\begin{equation}\label{eq22}
\ell_j=\sum_{i:\, i<j}\mathbf1\left(r^{(i)}_j<r_i\right)=\sum_{i:\, i<j}
\mathbf1\left(r^{(i)}_j=r^{(i+1)}_j\right).
\end{equation}
\end{lemma}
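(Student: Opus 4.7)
The plan is to unwind the definitions and reduce the statement to a pair of elementary comparisons between two counting quantities. Write $S_i:=\{w_k:k\ge i\}$, so
\[
r^{(i)}_j=\#\{k\ge i:\,w_k<w_j\},\qquad r_i=\#\{k>i:\,w_k<w_i\},
\]
both finite by Proposition \ref{prop2}. The common theme is that each of the ``three-index'' recursion and the ``left-count'' formula follows from a single equivalence relating an inequality $w_a\lessgtr w_b$ to an inequality between two rank statistics computed on the same tail set.

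For part (i), I would observe that passing from $S_i$ to $S_{i-1}=S_i\sqcup\{w_{i-1}\}$ changes the count by exactly $\mathbf1(w_{i-1}<w_j)$, so the recursion \eqref{eq21} is equivalent to the key claim
\[
w_{i-1}<w_j\iff r^{(i)}_j\ge r_{i-1}.
\]
This I would verify by cases. If $w_{i-1}<w_j$, then every $k\ge i$ with $w_k<w_{i-1}$ automatically satisfies $w_k<w_j$, so $r_{i-1}\le r^{(i)}_j$. If instead $w_{i-1}>w_j$ (equality is impossible because $\sigma$ is a bijection), then every $k\ge i$ with $w_k\le w_j$, and in particular the index $k=j$ itself, satisfies $w_k<w_{i-1}$; hence $r_{i-1}\ge r^{(i)}_j+1>r^{(i)}_j$.

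For part (ii), I would rewrite $\ell_j=\#\{i<j:w_i>w_j\}$ and run a parallel case analysis to establish $w_i>w_j\iff r^{(i)}_j<r_i$: when $w_i>w_j$, the index $k=j$ contributes to $r_i$ but not to $r^{(i)}_j$ (while every other contributor to $r^{(i)}_j$ is also one to $r_i$), yielding a strict gap; when $w_i<w_j$, the index $k=i$ itself contributes to $r^{(i)}_j$ and all contributors to $r_i$ are contributors to $r^{(i)}_j$. Shifting $i\mapsto i+1$ in \eqref{eq21} gives $r^{(i)}_j-r^{(i+1)}_j=\mathbf1(r^{(i+1)}_j\ge r_i)$, and the part-(i) equivalence (with $i-1,i$ replaced by $i,i+1$) then shows $r^{(i)}_j=r^{(i+1)}_j\iff r^{(i+1)}_j<r_i\iff w_i>w_j$. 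Summing over $i<j$ yields both expressions in \eqref{eq22}, and the sums are finite by Proposition \ref{prop2}.

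The only step with any real content is the equivalence $w_{i-1}<w_j\iff r^{(i)}_j\ge r_{i-1}$, which compares two rank statistics on the \emph{same} set $S_i$ against different thresholds $w_{i-1}$ and $w_j$. The subtlety is asymmetric: $w_j\in S_i$ but $w_{i-1}\notin S_i$, and the ``$+1$ gap'' arising from this asymmetry in the case $w_{i-1}>w_j$ is exactly what forces strict inequality. Once that comparison is correctly set up, everything else is bookkeeping.
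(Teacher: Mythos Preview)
Your proof is correct and follows essentially the same route as the paper's: both reduce \eqref{eq21} to the equivalence $w_{i-1}<w_j\iff r_{i-1}\le r^{(i)}_j$ and reduce \eqref{eq22} to $w_i>w_j\iff r_i>r^{(i)}_j\iff r^{(i)}_j=r^{(i+1)}_j$. The paper simply asserts these equivalences (they amount to the observation that rank in a fixed tail set preserves order), whereas you spell out the inclusion-of-index-sets case analysis explicitly; your remark about the ``$+1$ gap'' from the asymmetry $w_j\in S_i$, $w_{i-1}\notin S_i$ is exactly the point that makes the inequalities come out non-strict on one side and strict on the other.
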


\begin{proof}
(i) We have
$$
r^{(i-1)}_j=r^{(i)}_j+\mathbf1(w_{i-1}<w_j).
$$
But $w_{i-1}<w_j$ is equivalent to $r^{(i-1)}_{i-1}\le r^{(i)}_j$, and
$r^{(i-1)}_{i-1}=r_{i-1}$.

(ii) We have
$$
\ell_j=\sum_{i:\, i<j}\mathbf1(w_i>w_j),
$$
but $w_i>w_j$ is equivalent to $r^{(i)}_i>r^{(i)}_j$ (i.e., $r_i>r^{(i)}_j$)
and also to $r^{(i)}_j=r^{(i+1)}_j$.
\end{proof}

This concludes the proof of Theorem \ref{thm3}.

\end{proof}

\begin{corollary}
The Mallows measures on $\Sym^\bal$ corresponding to any two distinct values of
parameter $q$ are disjoint\/ {\rm(}mutually singular\/{\rm)}.
\end{corollary}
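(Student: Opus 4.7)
The plan is to pull the question back through the isomorphism of Theorem \ref{thm3}. Under the bijection $\psi : (\Sym^\bal, \Q) \to (X, \G)$, the Mallows measure $\Q_q$ with parameter $q$ is transported to the Bernoulli product measure
\[
\G_q = \bigotimes_{i \in \Z} G_q, \qquad G_q(n) = (1-q)q^n,
\]
on $X = \{0,1,2,\dots\}^\Z$. Since $\psi$ is a Borel isomorphism, the mutual singularity of $\Q_{q_1}$ and $\Q_{q_2}$ on $\Sym^\bal$ is equivalent to the mutual singularity of $\G_{q_1}$ and $\G_{q_2}$ on $X$. So the problem reduces to a standard statement about i.i.d. product measures.

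Next I would invoke the strong law of large numbers. Under $\G_q$ the coordinates $r_i$ are i.i.d. with mean $\mathbb{E}_q[r_i] = q/(1-q)$, so
\[
\frac{1}{n} \sum_{i=1}^n r_i \;\longrightarrow\; \frac{q}{1-q} \qquad \G_q\text{-almost surely}.
\]
Define the Borel set
\[
A_q := \Bigl\{ x \in X : \lim_{n\to\infty} \tfrac{1}{n}\sum_{i=1}^n x_i = \tfrac{q}{1-q} \Bigr\}.
\]
Then $\G_q(A_q) = 1$. Because the map $q \mapsto q/(1-q)$ is strictly increasing on $(0,1)$, the sets $A_{q_1}$ and $A_{q_2}$ are disjoint whenever $q_1 \neq q_2$. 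In particular $\G_{q_2}(A_{q_1}) = 0$, which gives the required mutual singularity. Transporting back via $\psi^{-1}$, the Borel sets $\psi^{-1}(A_{q_1})$ and $\psi^{-1}(A_{q_2})$ separate $\Q_{q_1}$ from $\Q_{q_2}$ in $\Sym^\bal$.

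There is no substantive obstacle here; the only point worth checking carefully is that $\psi$ is indeed a measurable bijection so that the pushforward argument is legitimate, but this is already contained in Theorem \ref{thm3}. (Alternatively, one could appeal to Kakutani's dichotomy: for product measures $\bigotimes G_{q_1}$ versus $\bigotimes G_{q_2}$ the Hellinger integral $\int \sqrt{dG_{q_1}/dG_{q_2}}\, dG_{q_2}$ is a constant strictly less than $1$ when $q_1 \neq q_2$, so its infinite product vanishes and the two product measures are mutually singular; this gives the same conclusion without invoking the SLLN.)
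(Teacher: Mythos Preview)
Your proof is correct and follows essentially the same route as the paper: transport the measures through the isomorphism $\psi$ of Theorem~\ref{thm3} to the i.i.d.\ product measures $\G_q$ on $X$, and then separate them there. The paper's one-line proof cites Kakutani's dichotomy directly, with the law of large numbers mentioned afterward as an alternative; you simply reverse the emphasis, giving the SLLN argument in full and relegating Kakutani to a parenthetical, but the substance is the same.
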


\begin{proof}
Immediate by virtue of classical Kakutani's theorem \cite{Kakutani} about product
measures.
\end{proof}

The latter can also be seen from the law of large numbers:
$\ell_1+\dots+\ell_n$ under $\Q$ is asymptotic to $nq/(1+q)$.

\section{The distribution of displacements}\label{section5}

Let as above $\Q$ be the Mallows measure on $\Sym^\bal$ with parameter
$q\in(0,1)$ and let $\Si$ be the random permutation of $\Z$ with law $\Q$. Consider
the two-sided infinite random sequence of {\it displacements\/}
\begin{equation}\label{eq12}
D_i:=\Si(i)-i, \qquad i\in\Z.
\end{equation}
The sequence $(D_i)$ is stationary in ``time'' $i\in\Z$, because a shift of
parameter $i$ amounts to a measure preserving transformation of the basic
probability space $(\Sym^\bal,\Q)$ --- conjugation of a balanced permutation by
a shift $s^{(n)}$, cf. Lemma \ref{lemma5}.

Below we use a nonstandard notation for some particular $q$-Pochhammer symbols:
$$
\langle n\rangle_q:=(q;q)_n=\prod_{k=1}^n (1-q^k), \qquad \langle
\infty\rangle_q:=(q;q)_\infty=\prod_{k=1}^\infty(1-q^k).
$$

In the present section, we will compute the one-dimensional marginal
distribution of $(D_i)$. In the next section, we will describe the
finite-dimensional distributions.

\begin{theorem}\label{thm4}
For any fixed $j\in\Z$, the distribution of displacement $D:=D_j$ is given by
\begin{equation}\label{Form1}
\Prob(D=d)=   (1-q)\langle \infty\rangle_q \sum_{\{r,\,\ell\,\ge 0:\,
r-\ell=d\}} \frac{q^{r\ell+r+\ell}}{\langle r\rangle_q \langle\ell\rangle_q},
\qquad d\in\Z.
\end{equation}
\end{theorem}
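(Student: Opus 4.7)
The plan is to compute explicitly the joint distribution of $(r_j, \ell_j)$ under $\Q$ and then sum over the constraint $r - \ell = d$ imposed by Lemma \ref{lemma7}. By the stationarity established in Lemma \ref{lemma5}, we may fix $j = 0$. Combining Lemma \ref{lemma7} with Theorem \ref{thm3}, the problem reduces to computing the joint law of $(r_0, \ell_0)$ on the product space $(X, \mathcal{G})$, where the $r_i$'s are i.i.d.\ geometric with parameter $q$, and $\ell_0$ is determined from the past $(r_i)_{i \le 0}$ via the recursions in Lemma \ref{lemma8}.

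The central observation is that the sequence $R_k := r_0^{(-k)}$ for $k \ge 0$ is a Markov chain: starting from $R_0 = r_0$, the recursion in Lemma \ref{lemma8}(i) reads $R_{k+1} = R_k + \mathbf{1}(R_k \ge r_{-k-1})$, and since $r_{-k-1}$ is independent of $R_0, \ldots, R_k$ and geometric, we obtain the pure birth/stay transitions
\[
\Prob(R_{k+1} = m \mid R_k = m) = q^{m+1}, \qquad \Prob(R_{k+1} = m+1 \mid R_k = m) = 1 - q^{m+1}.
\]
By Lemma \ref{lemma8}(ii), $\ell_0$ equals the total number of ``stay'' events of this chain. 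Conditional on $R_0 = r$, the chain visits each level $r+m$, $m \ge 0$, and the numbers of stays $S_m$ at successive levels are independent geometric variables with $\Prob(S_m = s) = (1 - q^{r+m+1})\, q^{(r+m+1)s}$; finiteness of $\ell_0 = \sum_m S_m$ is guaranteed by Proposition \ref{prop2}, and is also visible directly.

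Conditioning on $r_0 = r$ and summing over compositions $(s_0, s_1, \dots)$ with $\sum s_m = \ell$,
\[
\Prob(\ell_0 = \ell \mid r_0 = r) = (q^{r+1};q)_\infty \cdot q^{(r+1)\ell} \sum_{\sum s_m = \ell} q^{\sum_m m\, s_m}.
\]
The inner sum is the generating function for partitions with at most $\ell$ parts and equals $1/\langle \ell\rangle_q$; together with $(q^{r+1};q)_\infty = \langle\infty\rangle_q / \langle r\rangle_q$ this yields
\[
\Prob(r_0 = r,\, \ell_0 = \ell) = (1-q) q^r \cdot \frac{\langle\infty\rangle_q}{\langle r\rangle_q} \cdot \frac{q^{(r+1)\ell}}{\langle\ell\rangle_q} = \frac{(1-q)\langle\infty\rangle_q\, q^{r\ell + r + \ell}}{\langle r\rangle_q \langle\ell\rangle_q}.
\]
Summing over pairs $(r, \ell)$ with $r - \ell = d$ gives the claim. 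The main obstacle is identifying the Markov structure of the $R_k$'s cleanly from Lemma \ref{lemma8}; once this is in place, both the geometric decomposition of $\ell_0$ and the partition-theoretic evaluation of the inner sum are routine. As a sanity check, the $q$-exponential identity $\sum_{\ell \ge 0} z^\ell/\langle\ell\rangle_q = 1/(z;q)_\infty$ verifies that the conditional probabilities sum to one.
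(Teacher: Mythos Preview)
Your proof is correct and follows essentially the same route as the paper's first proof: identify $D_j=R_j-L_j$ via Lemma \ref{lemma7}, use Lemma \ref{lemma8} and the i.i.d.\ geometric structure from Theorem \ref{thm3} to recognize the Markov chain \eqref{transprob}, and read off $L_j$ as the total number of zero increments. The only cosmetic difference is that the paper computes $\Prob(L_j=\ell\mid R_j=r)$ by writing down the generating function \eqref{genefu} and applying the Eulerian identity, whereas you decompose $\ell_0$ as a sum of independent geometric sojourn times and evaluate the resulting composition sum via the partition generating function; these are two phrasings of the same calculation and lead to the identical joint law \eqref{eq8}.
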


This distribution is symmetric about 0. It can be expressed through the basic
geometric series ${}_0\phi_1$ (see \cite{Gasper}),
$$
\Prob(D=d)=\frac{(1-q)(q;q)_\infty
q^d}{(q;q)_d}{}_0\phi_1(-;q^{d+1};q,q^{d+3}), \qquad d=0,1,2,\dots,
$$
or through a $q$-Bessel function, see  \cite[Ex. 1.24]{Gasper}.

We will give two proofs of the theorem.

\begin{proof}[First proof]
Let $(R_i)$ and $(L_i)$ be the sequences of right and left inversion counts for
$\Si$. By Lemma \ref{lemma7}, $D_j=R_j-L_j$. To compute the distribution of
$R_j-L_j$ we apply Lemma \ref{lemma8}. Set
$$
x_0=r^{(j)}_j, \quad x_1=r^{(j-1)}_j, \quad x_2=r^{(j-2)}_j, \dots,
$$
where we use the notation of Lemma \ref{lemma8}. Since $R_j, R_{j-1},R_{j-2},
\dots$ are  independent random variables with geometric distribution $G$, claim
(i) of Lemma \ref{lemma8} implies that, given $R_j=r$, the sequence
$x_0,x_1,\dots$ forms a Markov chain on $\{r,\,r+1,\,r+2,\,\dots\}$ with
initial state $x_0=r$, 0\,-1 increments and  transition probabilities
\begin{equation}\label{transprob}
\Prob(k\to k)=q^{k+1}, \quad \Prob(k\to k+1)=1-q^{k+1}, \qquad
k=r,\,r+1,\,r+2,\,\dots\,.
\end{equation}
Next, by claim (ii) of the lemma, $L_j$ equals the total number  of
0-increments (this number is finite almost surely by Lemma \ref{prop3}). It
follows that the (conditional) probability generating function for $L_j$ has
the form
\begin{equation}\label{genefu}
\sum_{\ell=0}^\infty\Prob(L_j=\ell\mid R_j=r)x^l=\prod_{k=r}^\infty
\frac{1-q^{k+1}}{1-xq^{k+1}}.
\end{equation}

The coefficient by $x^\ell$ is extracted from \eqref{genefu} using the Eulerian
identity (see e.g. \cite[(1.3.15)]{Gasper})
$$
\prod_{m=0}^\infty (1-yq^m)^{-1}=\sum_{n=0}^\infty \frac{y^n}{\langle
n\rangle_q},
$$
where we substitute $y=xq^{r+1}$. This gives
\begin{equation}\label{RgivL}
\Prob(L_j=\ell\mid R_j=r)= q^{\ell(r+1)}\frac{\langle\infty\rangle_q}{ \langle
r\rangle_q\langle\ell\rangle_q}.
\end{equation}
Since the distribution of $R_j$ is geometric we find the joint distribution of
$(R_j,L_j)$:
\begin{equation}\label{eq8}
\Prob(R_j=r,L_j=\ell)
=(1-q)q^{r\ell+r+\ell}\frac{\langle\infty\rangle_q}{\langle
r\rangle_q\langle\ell\rangle_q}, \qquad r,\ell\in\Z.
\end{equation}
Finally, the distribution of $D=D_j=R_j-L_j$ follows by summation.
\end{proof}

\begin{proof}[Second proof]
By stationarity, it suffices to find the distribution of $\Si(1)-1$. Next, it
will be convenient for us to replace $\Si$ with the inverse permutation
$\Si^{-1}$, which has the same law, see Corollary \ref{cor1}. Thus, we will
deal with $\Si^{-1}(1)-1$.

Let $w=(w_i)_{i\in\Z}$ be the random word corresponding to $\Si$. We encode $w$
by the triple $(w^+,w^-,\la)$, see the definitions in \eqref{eq9},
\eqref{eq13}, \eqref{eq10}, and \eqref{eq11} above. Denote by $B+1=1,2,\dots$
the position of 1 in $w^+$. Then, in the notation of \eqref{eq9}, the position
of 1 in $w$ is $i_{B+1}$, which is the same as $B+1-\la_{B+1}$, as is seen from
\eqref{eq13}. On the other hand, this position is just
$\Si^{-1}(1)$. Therefore, we may write
$$
\Si^{-1}(1)=B+1-C, \qquad C:=\la_{B+1}.
$$
To find the distribution of
$$
\Si^{-1}(1)-1=B-C
$$
we will compute the probabilities
$$
\Prob(B=b, \, C=c), \qquad b,c=0,1,2,\dots\,.
$$

Recall that $w^+$ and $\la$ are independent. Therefore,
$$
\Prob(B=b, \, C=c)=\Prob(\textrm{1 occupies in $w^+$ position
$b+1$})\cdot\Prob(\la_{b+1}=c).
$$
The first factor in the right-hand side is determined from the $q$-shuffle
algorithm generating $w^+$. This gives
$$
\Prob(\textrm{1 occupies in $w^+$ position $b+1$})=q^b(1-q).
$$
In the second factor $\Prob(\la_{b+1}=c)$, the probability is understood according
to the distribution \eqref{eq3}. Therefore,
$$
\Prob(\la_{b+1}=c)=\langle\infty\rangle_q\sum_\la q^{|\la|}
$$
summed over diagrams $\la$ of shape specified on Figure 1. The sum in question
is computed using the generating function for Young diagrams in a strip:
$$
\sum_{\nu=(\nu_1\ge\dots\ge\nu_a\ge0)}q^{|\nu|}=\frac1{\langle a\rangle_q};
$$
we apply this formula for $\nu=\la^+$ and $\nu=(\la^-)'$ (the transposed
diagram). This gives

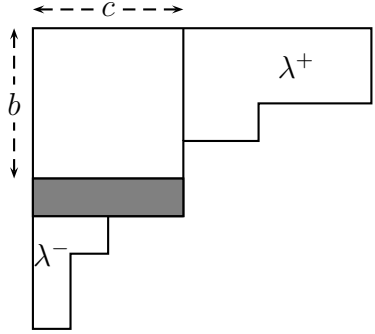
\begin{figure}

\vskip1truecm

\psset{unit=0.5cm}
\begin{center}
\begin{pspicture}(-1,-1)(10,-10)
\pspolygon(0,0)(9,0)(9,-2)(6,-2)(6,-3) (4,-3) (4,-4) (4,-5)
(2,-5)(2,-6)(1,-6)(1,-8)(0,-8) \psline(0,-4)(4,-4) \psline(4,0)(4,-3)
\psline(0,-5)(4,-5) \pspolygon[fillstyle=solid,
fillcolor=gray](0,-4)(4,-4)(4,-5)(0,-5) \psset{linestyle=dashed, arrows=<->}
\psline(0,0.5)(4,0.5) \rput(2.0, 0.5) {\psframebox*[boxsep=false] {$c$}}
\rput(7,-1){$\lambda^+$} \rput(0.5, -6){$\lambda^-$} \psline(-0.5,0)(-0.5,-4)
\rput*(-0.5,-2){$b$}
\end{pspicture}
\end{center}
\caption{One-row decomposition of the diagram.}
\end{figure}

$$
\Prob(\la_{b+1}=c)=\langle\infty\rangle_q\,\frac{q^{(b+1)c}} {\langle
b\rangle_q \langle c\rangle_q}.
$$

Thus,
$$
\Prob(B=b,\,C=c)=(1-q)\langle\infty\rangle_q\,\frac{q^{bc+b+c}} {\langle
b\rangle_q \langle c\rangle_q}
$$
and finally
$$
\Prob(\Si^{-1}(1)-1=d)=\Prob(B-C=d)=(1-q)\langle\infty\rangle_q\,\sum_{b,\,c\ge0:\,
b-c=d}\frac{q^{bc+b+c}} {\langle b\rangle_q \langle c\rangle_q},
$$
which is the same as \eqref{Form1}.
\end{proof}

\begin{remark}
The distribution \eqref{Form1} has  exponentially decaying tails
$$
\Prob(|D|>m)\asymp q^m,     \qquad m\to\infty.
$$
Indeed, the lower bound follows from \eqref{Form1} while the upper bound is
easy from
$$
\Prob(|D_j|>m)\le \,\max(\Prob(R_j>m), \,\Prob(L_j>m))\le 2\Prob(R_j>m)=2q^m.
$$
This allows us to estimate the size of fluctuation of $\Sigma$ about the
identity permutation. Using
$$\
\Prob(|D_n|>(1+\epsilon)\log_{1/q}|n|)<|n|^{-1-\epsilon}
$$
and applying  the Borel-Cantelli lemma we obtain
$$
\limsup_{|n|\to\infty}\frac{|D_n|}{\log_{1/q}|n|}\leq 1 \qquad {\rm a.s.}
$$
We conjecture that this bound is sharp.
\end{remark}

\section{Finite-dimensional distributions}\label{section6}

We proceed with deriving a multivariate distribution for the displacements
\eqref{eq12}.

\begin{theorem}
For $k=1,2,\dots$ and integers $d_1\le\dots\le d_k$
\begin{multline*}
\Prob(D_1=d_1,\dots, D_k=d_k)\\
=(1-q)^kq^{-k(k+1)/2} \langle\infty\rangle_q \prod_{m=2}^{k} \langle
d_m-d_{m-1}\rangle_q \sum \frac{q^{\sum_{1\leq i\leq j\leq
k}(b_i+1)(a_j+1)}}{\langle b_1\rangle_q\dots \langle b_k \rangle_q \langle
a_1\rangle_q \dots \langle a_k \rangle_q},
\end{multline*}
where the summation is over all nonnegative integers $a_1,b_1,\dots,a_k,b_k$
which satisfy the constraints
\begin{equation}\label{constraints}
(b_1+\dots+b_m)-(a_m+\dots+a_k)=d_m,  \qquad m=1,\dots,k.
\end{equation}
\end{theorem}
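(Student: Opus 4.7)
The plan is to extend the second proof of Theorem \ref{thm4} from a single value to the first $k$ values simultaneously. By Corollary \ref{cor1} we may work with $\Si^{-1}$ and ask for the probability of the event that the values $1,2,\ldots,k$ occupy in the word $w$ the strictly increasing positions $1+d_1,\,2+d_2,\,\ldots,\,k+d_k$. Since these values are positive, the interlacing decomposition $\si\leftrightarrow(w^+,w^-,\la)$ of Section \ref{section3} places them among the letters of $w^+$; write $B_m+1$ for the position of value $m$ in $w^+$, so that $0\le B_1<\ldots<B_k$. By \eqref{eq13} the position of value $m$ in $w$ equals $B_m+1-\la_{B_m+1}$, which forces $\la_{B_m+1}=C_m:=B_m+1-m-d_m$. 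Setting $b_1:=B_1$, $b_m:=B_m-B_{m-1}-1$ for $m\ge 2$, $a_m:=C_m-C_{m+1}$ for $m<k$, and $a_k:=C_k$, one checks by direct algebra that nonnegativity of $b_m$ and $a_m$ is equivalent to the strict increase of the $B_m$ together with the partition inequalities $C_1\ge\ldots\ge C_k\ge 0$, and that the theorem's constraint $(b_1+\ldots+b_m)-(a_m+\ldots+a_k)=d_m$ becomes an identity.

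Because $\Q=\Q^+\otimes\Q^-\otimes\mathcal{P}$ and the event depends only on $w^+$ and $\la$, the probability will be written as the sum over admissible $(b,a)$ of $\Prob(w^+\text{-part})\cdot\Prob(\la\text{-part})$. For the $w^+$-part, the event $w^+_{B_m+1}=m$ for every $m$, the $q$-shuffle algorithm gives a direct readout: on each of the $b_m$ positions strictly between the placements of values $m-1$ and $m$ the reservoir contains $\{m,m+1,\ldots,k\}$ as its $k-m+1$ smallest elements, so we need $R_i\ge k-m+1$ (weight $q^{k-m+1}$); on position $B_m+1$ we need $R_i=0$ (weight $1-q$) to pick $m$. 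This yields
$$
\Prob(w^+\text{-part})=(1-q)^k\, q^{\sum_{m=1}^k(k-m+1)b_m}.
$$
For the $\la$-part we shall use $\mathcal{P}(\la)=\langle\infty\rangle_q\, q^{|\la|}$ and decompose $\la$ by columns: the heights $\la'_j$ satisfy $\la'_j\ge B_k+1$ on $1\le j\le C_k$, $\la'_j\in[B_m+1,B_{m+1}]$ on $C_{m+1}<j\le C_m$ for $1\le m\le k-1$, and $\la'_j\le B_1$ on $j>C_1$. Within each strip the heights form an independent partition in a rectangle or in a half-strip; the standard identities $\sum_{\mu\subseteq r\times s}q^{|\mu|}=\langle r+s\rangle_q/(\langle r\rangle_q\langle s\rangle_q)$ and $\sum_{\mu:\ \le n\text{ parts}}q^{|\mu|}=1/\langle n\rangle_q$ will evaluate $\sum q^{|\la|}$ as a product with denominator $\prod_m\langle b_m\rangle_q\langle a_m\rangle_q$, base exponents $q^{a_m(B_m+1)}$, and ``leftover'' numerator factors $\langle a_m+b_{m+1}\rangle_q$, one per middle strip.

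The calculation will then hinge on the identity $a_m+b_{m+1}=d_{m+1}-d_m$, which follows at once by subtracting consecutive instances of the constraint. It converts the leftovers into $\prod_{m=2}^k\langle d_m-d_{m-1}\rangle_q$, which can be pulled outside the sum as the prefactor of the theorem. The $q$-exponents combine via the elementary identity
$$
\sum_{1\le i\le j\le k}(b_i+1)(a_j+1)=\frac{k(k+1)}{2}+\sum_{m=1}^k(k-m+1)b_m+\sum_{m=1}^k a_m(B_m+1),
$$
so that the constant $k(k+1)/2$ is absorbed into the prefactor $q^{-k(k+1)/2}$ and the stated formula emerges. The main obstacle is the column-strip bookkeeping for $\la$; the pleasant surprise is that the $q$-binomial leftovers $\langle a_m+b_{m+1}\rangle_q$ conspire, through the constraint, to produce exactly the product $\prod_{m=2}^k\langle d_m-d_{m-1}\rangle_q$ featured in the theorem.
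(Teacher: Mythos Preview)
Your proposal is correct and follows essentially the same route as the paper's proof: pass to $\Si^{-1}$ via Corollary~\ref{cor1}, use the interlacing encoding $\si\leftrightarrow(w^+,w^-,\la)$ and the product structure $\Q=\Q^+\otimes\Q^-\otimes\mathcal P$, compute the $\Q^+$-probability that the letters $1,\dots,k$ sit at prescribed positions in $w^+$ via the $q$-shuffle, and compute the $\mathcal P$-probability that $\la$ has prescribed row-lengths by a strip decomposition of the diagram; then pull the factors $\langle a_m+b_{m+1}\rangle_q=\langle d_{m+1}-d_m\rangle_q$ out of the sum. Your column decomposition of $\la$ is simply the transpose of the paper's row decomposition in Lemma~\ref{L9}.

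One point worth recording is that your formula for the $w^+$-part,
\[
\Prob(w^+\text{-part})=(1-q)^k\,q^{\sum_{m=1}^k(k-m+1)b_m},
\]
is the correct one; the paper's printed formula \eqref{eq18}, with exponent $b_1+\cdots+b_k$, is a slip (it is right only for $k=1$). Correspondingly, the correct base exponent in the $\la$-part is $\sum_{i\le j}(b_i+1)a_j$ (equivalently your $\sum_m a_m(B_m+1)$), not the $\sum_{i\le j}(b_i+1)(a_j+1)$ printed in Lemma~\ref{L9}. Your identity
\[
\sum_{1\le i\le j\le k}(b_i+1)(a_j+1)=\frac{k(k+1)}{2}+\sum_{m=1}^k(k-m+1)b_m+\sum_{m=1}^k a_m(B_m+1)
\]
is exactly what reconciles the two correct pieces with the exponent $\sum_{i\le j}(b_i+1)(a_j+1)-k(k+1)/2$ appearing in the theorem, so your bookkeeping lands on the stated formula.
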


\noindent{\bf Comments.} (i) In the case $k=1$ the product over $m$ is empty
and the result agrees with Theorem \ref{thm4}.

(ii) The constraints $d_1\le \dots\le d_k$ are not substantial and only
introduced to simplify the formula. These inequalities are equivalent to
$\Si(1)<\dots<\Si(k)$, so that the general case can be handled by introducing
the additional factor
$$
q^{{\rm inv}(d_1+1,\dots,d_k+k)}=q^{\inv(\Si_1,\dots,\Si_k)}
$$
implied by the $q$-exchangeability; here ``$\inv$'' stands for the number of
inversions.

(iii) By stationarity, the distribution does not change if we simultaneously
shift all indices $1,\dots,k$ by a constant. However, we did not manage to
write a reasonable formula for $(D_{i_1},\dots, D_{i_k})$ with arbitrary
indices $i_1<\dots<i_k$.

(iv) Excluding variables $b_1,\dots,b_k$ from relations \eqref{constraints},
the resulting inequalities on the remaining $k$ variables $a_1,\dots,a_k$ take
the form
\begin{gather*}
0\le a_1\le d_2-d_1 \\
0\le a_2\le d_3-d_2\\
\vdots\\
0\le a_{k-1} \le d_k-d_1\\
a_k\ge\max(0,\, -a_1-\dots-a_{k-1}-d_1),
\end{gather*}
which shows that the summation runs over a domain with only one of free
variables assuming infinitely many values.

\medskip

\begin{proof}
We generalize the second proof of Theorem \ref{thm4}. Let us compute the
probability
\begin{equation}\label{eq15}
\Prob(\Si^{-1}(1)-1=d_1,\,\dots,\, \Si^{-1}(k)-k=d_k).
\end{equation}

Consider again the encoding $(w^+,w^-,\la)$ of the random word $w$ associated
with $\Si$. Let $x_1,\dots,x_k$ be the positions of letters $1,\dots,k$ in
$w^+$. Then the positions of the same letters in $w$ are
$$
\Si^{-1}(1)=x_1-y_1, \,\dots, \,\Si^{-1}(k)=x_k-y_k,
$$
where we set
$$
y_1=\la_{x_1},\, \dots,\, y_k=\la_{x_k}.
$$

The assumption $d_1\le\dots\le d_k$ means $1\le x_1<\dots<x_k$, which entails
$$
y_1\ge\dots\ge y_k\ge0.
$$
Now pass to new variables $b_1,\dots,b_k,a_1,\dots,a_k$ by setting
\begin{equation}\label{eq14}
\begin{aligned}
x_1 &=b_1+1  &\quad y_1& =a_1+\dots +a_k\\
x_2 &=b_1+b_2+2  & \qquad y_2& =a_2+\dots+a_k\\
 & \phantom{a}\vdots  & \qquad &\phantom{a}\vdots\\
 x_k &=b_1+\dots+b_k+k  & \qquad y_k & = a_k
\end{aligned}
\end{equation}
Then the above inequalities imposed on $x_1,\dots,x_k, y_1,\dots,y_k$ just mean
that the new variables are nonnegative, and conditions
$$
\Si^{-1}(m)-i=d_m, \qquad m=1,\dots,k
$$
take the form \eqref{constraints}.

Introduce the set of Young diagrams
\begin{equation}\label{eq16}
\Lambda(x_1,\dots,x_k;y_1,\dots,y_k)=\{\la: \la_{x_1}=y_1, \dots,
\la_{x_k}=y_k\}.
\end{equation}
The probability \eqref{eq15} can be written in the form
\begin{equation}\label{eq19}
\sum_{b_1,\dots,b_k,\,a_1,\dots,a_k}
P_1(x_1,\dots,x_k)P_2(x_1,\dots,x_k;y_1,\dots,y_k),
\end{equation}
with summation
over nonnegative integers $b_1,\dots,b_k,a_1,\dots,a_k$ subject to
constraints \eqref{constraints}, where $P_1(x_1,\dots,x_k)$ is the probability
that letters $1,\dots,k$ occupy positions $x_1,\dots,x_k$ in $w^+$, and
$$
P_2(x_1,\dots,x_k;y_1,\dots,y_k)
=\langle\infty\rangle_q\sum_{\la\in\Lambda(x_1,\dots,x_k;y_1,\dots,y_k)}
q^{|\la|}
$$
is the probability that the random diagram $\la$ with law \eqref{eq3} falls
into the subset \eqref{eq16}. This probability is computed in the next lemma.

\begin{figure}\label{Fig2}
\psset{xunit=1pt} \psset{yunit=1pt} \psset{runit=1pt}
\begin{center}
\begin{pspicture}(0,0)(250,300)

\pspolygon(20,300)(180,300)(180,240)(20,240) \psline(20,250)(180,250)

\pspolygon(20,240)(140,240)(140,180)(20,180) \psline(20,190)(140,190)

\put(50,160){\circle*{2}} \put(60,160){\circle*{2}} \put(70,160){\circle*{2}}

\pspolygon(20,140)(100,140)(100,90)(20,90) \psline(20,100)(100,100)

\pspolygon[fillstyle=vlines,fillcolor=gray,hatchangle=-45](180,300)(220,300)(220,290)(210,290)
(210,280)(195,280)(195,270)(190,270)(190,260)(185,260)(185,250)(180,250)

\pspolygon[fillstyle=vlines,fillcolor=gray,hatchangle=-45](140,240)(180,240)(180,230)(170,230)(170,215)
(155,215)(155,200)(150,200)(150,190)(140,190)

\pspolygon[fillstyle=vlines,fillcolor=gray,hatchangle=-45](100,140)(120,140)(120,130)(115,130)(115,110)
(105,110)(105,100)(100,100)

\pspolygon[fillstyle=vlines,fillcolor=gray,hatchangle=-45](20,90)(100,90)
(100,60)(70,60)(70,50) (60,50)(60,20)(30,20)(30,5)(20,5)

\pspolygon[fillstyle=solid, fillcolor=gray] (20,250)(180,250)(180,240)(20,240)

\pspolygon[fillstyle=solid, fillcolor=gray] (20,190)(140,190)(140,180)(20,180)

\pspolygon[fillstyle=solid, fillcolor=gray] (20,100)(100,100)(100,90)(20,90)

\psline[linestyle=dotted, dotsep=1pt](180,190)(180,240)
\psline[linestyle=dotted, dotsep=1pt](180,190)(140,190)

\psline[linestyle=dotted,dotsep=1pt](120,100)(120,130)
\psline[linestyle=dotted,dotsep=1pt](120,100)(100,100)

\psset{linestyle=dashed,arrows=<->} \psline(140,175)(180,175) \put(158,179)
{$a_1$} \psline(100,145)(120,145) \put(108,149) {$a_{k-1}$}
\psline(20,2)(100,2) \put(58,6){$a_k$}

    \psline (15,250)(15,300)
\put(2,273){$b_1$} \psline(15,190)(15,240) \put(2,213){$b_2$}
\psline(15,190)(15,240) \psline(15,100)(15,140) \put(2,117){$b_k$}
\end{pspicture}
\end{center}
\caption{Multirow decomposition of a diagram}
\end{figure}
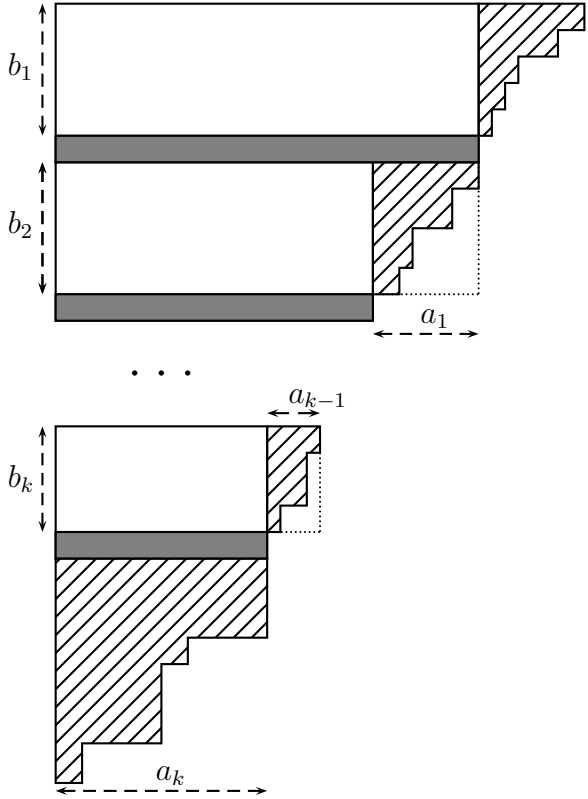

\begin{lemma}\label{L9}
For $k=1,2,\dots$ and integers
$$
1\leq x_1<x_2<\dots<x_k, \quad y_1\geq y_2\geq\dots\geq y_k\geq 0
$$
written in form \eqref{eq14}, one has
\begin{multline}\label{eq17}
P_2(x_1,\dots,x_k;y_1,\dots,y_k) \\= \langle\infty\rangle_q \frac{\langle
b_2+a_1 \rangle_q \dots \langle b_k+a_{k-1} \rangle_q }{\langle b_1
\rangle_q\dots \langle b_k \rangle_q \langle a_1 \rangle_q\dots \langle a_k
\rangle_q } ~ q^{\sum_{\{(i,j):1\leq i\leq j\leq k\}} (b_i+1)(a_j+1)}.
\end{multline}
\end{lemma}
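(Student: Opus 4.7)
\emph{Proof plan.} The approach is a direct generating-function calculation based on the multirow decomposition of $\la$ shown in Figure~2. Given the constraints $\la_{x_m}=y_m$ for $m=1,\dots,k$, the rows of $\la$ split into $k$ fixed rows together with three kinds of free blocks, each of which yields a standard $q$-combinatorial count. The sum $\sum_{\la\in\Lambda(x_1,\dots,x_k;y_1,\dots,y_k)}q^{|\la|}$ then factorises across these blocks.

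More precisely, I would argue as follows. (i)~The $b_1$ rows with indices $1,\dots,x_1-1$ satisfy $\la_i\ge y_1$; after subtracting $y_1$ they form an arbitrary partition with at most $b_1$ parts, contributing $q^{b_1 y_1}/\langle b_1\rangle_q$. (ii)~For each $m=2,\dots,k$, the $b_m$ rows with indices strictly between $x_{m-1}$ and $x_m$ satisfy $y_m\le\la_i\le y_{m-1}$; after shifting down by $y_m$ they fill the rectangle of height $b_m$ and width $a_{m-1}$, contributing $q^{b_m y_m}\,\langle b_m+a_{m-1}\rangle_q/(\langle b_m\rangle_q\langle a_{m-1}\rangle_q)$. (iii)~The tail $\la_{x_k+1},\la_{x_k+2},\dots$ is an arbitrary partition with parts bounded by $y_k=a_k$, contributing $1/\langle a_k\rangle_q$. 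On top of this, each of the $k$ fixed rows $\la_{x_m}$ carries $q^{y_m}$.

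Multiplying the pieces and inserting the prefactor $\langle\infty\rangle_q$ produces
$$
P_2=\langle\infty\rangle_q\,\frac{\prod_{m=2}^{k}\langle b_m+a_{m-1}\rangle_q}{\prod_{m=1}^{k}\langle b_m\rangle_q\,\prod_{m=1}^{k}\langle a_m\rangle_q}\,q^{\sum_{m=1}^{k}(b_m+1)y_m},
$$
and substituting $y_m=\sum_{j\ge m}a_j$ rewrites the exponent as $\sum_{1\le i\le j\le k}(b_i+1)a_j$, which is equivalent to the exponent appearing in the statement of \eqref{eq17} up to the identity $(b_i+1)(a_j+1)=(b_i+1)a_j+(b_i+1)$. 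The only real work is the careful bookkeeping of the $q$-power; a sanity check against the $k=1$ case, which must reproduce $\Prob(\la_{b+1}=c)=\langle\infty\rangle_q\,q^{(b+1)c}/(\langle b\rangle_q\langle c\rangle_q)$ from the second proof of Theorem~\ref{thm4}, is the fastest way to catch an error. The three generating-function identities used (partitions with at most $n$ parts, partitions inside an $m\times n$ rectangle, and partitions with parts at most $n$) are all classical, so no deeper obstacle remains.
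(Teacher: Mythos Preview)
Your argument is essentially the paper's: the same row--by--row decomposition of $\la$ into $k$ fixed rows, $k-1$ diagrams inside rectangles $b_m\times a_{m-1}$, and two edge diagrams with at most $b_1$ rows (respectively at most $a_k$ columns), combined via the standard generating functions for partitions in a strip and in a box. Your bookkeeping is correct and yields the exponent $\sum_{m}(b_m+1)y_m=\sum_{1\le i\le j\le k}(b_i+1)a_j$, which is exactly what the paper's own computation of contributions (i)--(iv) produces.

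The one point to sharpen: the two exponents are \emph{not} equal --- they differ by $\sum_{1\le i\le j\le k}(b_i+1)$, which is not zero --- so ``equivalent up to the identity'' is the wrong phrasing. Your $k=1$ sanity check settles the matter: it reproduces $\Prob(\la_{b+1}=c)=\langle\infty\rangle_q\,q^{(b+1)c}/(\langle b\rangle_q\langle c\rangle_q)$ from the second proof of Theorem~\ref{thm4}, confirming that $\sum_{i\le j}(b_i+1)a_j$ is the correct exponent and that the $(a_j+1)$ in the displayed formula \eqref{eq17} is a misprint for $a_j$. State this conclusion explicitly rather than leaving it as a loose end.
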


\begin{proof}
Figure 2 shows that a diagram $\la\in\Lambda(x_1,\dots,x_k;y_1,\dots,y_k)$ is
comprised of

\begin{itemize}

\item[(i)] $k$ rectangles of size $b_m\times(a_m+\dots+a_k)$, $~m=1,\dots,k$,

\item[(ii)] $k$ rows $a_m+\dots+a_k$, $~m=1,\dots,k$,

\item[(iii)] $k-1$ diagrams enclosed in rectangles $b_{2}\times a_1, ~b_3\times
a_2,~\dots, b_k\times a_{k-1}$,

\item[(iv)] two edge diagrams, one with at most  $b_1$ rows, another with at
most $a_k$ columns.

\end{itemize}

The contribution (i) of the rectangles is the factor $q^{\sum_{\{(i,j):1\leq
i\leq j\leq k\}} b_ia_j}$. The contribution (ii) of distinguished rows is
$q^{\sum_{m=1}^k ma_m}$. It is known (see Proposition 1.3.19 in \cite{Stanley}) that
the generating function of diagrams enclosed in rectangle $b\times a$ is the
$q$-binomial coefficient
$$
\frac{\langle b+a \rangle_q}{\langle b \rangle_q \langle a \rangle_q}.
$$
From this, the contribution of (iii) is
$$
\frac{\langle b_2+a_1 \rangle_q}{\langle b_2 \rangle_q \langle a_1
\rangle_q}\cdots \frac{\langle b_k+a_{k-1} \rangle_q}{\langle b_k \rangle_q
\langle a_{k-1} \rangle_q}.
$$
Finally, the contribution of edge diagrams is
$$
\frac{1}{\langle b_1\rangle_q  \langle a_k\rangle_q}.
$$
Multiplying the factors out and recalling the normalizing factor
$\langle\infty\rangle_q$ yields the result.
\end{proof}

Now we can quickly complete the proof. The probability $P_1(x_1,\dots,x_k)$ is
easily found from the $q$-shuffle algorithm:
\begin{equation}\label{eq18}
P_1(x_1,\dots,x_k)=(1-q)^k q^{b_1+\dots+b_k}.
\end{equation}
Here we substantially used the feature that the
letters $1,\dots,k$
are the first successive letters
in the alphabet $\Z_+$.
We could not do the same with
arbitrary indices $i_1<\dots<i_k$, that is, to get a closed formula for the
probability that given generic letters $i_1,\dots,i_k$ occupy given generic
positions $x_1,\dots,x_k$ in the random word $w^+$.

Finally, substitute \eqref{eq17} and \eqref{eq18} into \eqref{eq19}, and note that
$$
a_m+b_{m-1}=d_m-d_{m-1}, \qquad m=2,\dots,k.
$$
Because of these relations, the factor
$$
\prod_{m=2}^k\langle a_m+b_{m-1} \rangle_q=\prod_{m=2}^k\langle d_m - d_{m-1} \rangle_q,
$$
is a constant and so can be taken out of the sum. This gives the desired formula.
\end{proof}

\section{Complements}\label{section7}

\subsection{A characterization of the inversion counts.}\label{subsection7.1}

In the course of proving Theorem \ref{thm3}, we established in Lemmas
\ref{lemma7} and \ref{lemma8}  a correspondence between balanced permutations
and  sequences $(r_i, ~i\in \Z)$ of their right inversion counts. By far not
every nonnegative integer sequence $(r_i, ~i\in \Z)$ can occur in this way,
thus it is of some interest to describe possible sequences in some detail.

\begin{theorem}\label{L3}
A nonnegative integer sequence $(r_i,~ i\in\Z)$ occurs as a sequence of right
inversion counts for some permutation $\sigma\in\Sym^{\rm bal}$ if and only if
the following two conditions hold:
\begin{itemize}
\item[\rm (i)] the values $(\ell_i,~ i\in\Z)$ determined from  \eqref{eq22} are
finite,
\item[\rm (ii)] $r_i=0$ for infinitely many $i\in\Z_+$.
\end{itemize}
Under these conditions such $\sigma$ is unique, and it has left inversion
counts $(\ell_i, i\in\Z)$.
\end{theorem}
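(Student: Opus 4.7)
The plan splits into necessity of (i), (ii) --- which is immediate from Proposition \ref{prop2} and the admissibility/balance structure of $\Sym^\bal$ --- and a constructive sufficiency proof; uniqueness and the identification of $(\ell_i)$ with the actual left inversion counts of $\sigma$ fall out as part of the same argument. I expect the bijectivity of the constructed $\sigma$ to be the main obstacle.

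\emph{Necessity.} Let $\sigma \in \Sym^\bal$ have right inversion counts $(r_i)$. Proposition \ref{prop2} gives $\ell_i < \infty$ for every $i$, and Lemma \ref{lemma8}(ii) identifies those $\ell_i$ with the output of \eqref{eq22}, giving (i). For (ii), fix $n \in \Z$ and split $\Z = \Z_{<n} \sqcup \Z_{\ge n}$. Admissibility makes both off-diagonal blocks of $A(\sigma)$ with respect to this splitting have finite rank, and balance zero equates those ranks; consequently $\sigma(\Z_{\ge n})$ differs from $\Z_{\ge n}$ by a finite symmetric difference, hence is bounded below and attains its minimum at some $j_n \ge n$, where necessarily $r_{j_n}=0$. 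Letting $n \to +\infty$ yields infinitely many such indices in $\Z_+$.

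\emph{Sufficiency.} Given $(r_i)$ satisfying (i), (ii), I would define $\ell_i$ by \eqref{eq22} (finite by (i)) and set $\sigma(i) := i + r_i - \ell_i$ as suggested by Lemma \ref{lemma7}. To verify $\sigma \in \Sym^\bal$, the essential step is bijectivity, and I would build the associated two-sided word $w$ explicitly. On $\Z_+$, run the deterministic form of the $q$-shuffle algorithm of Section \ref{section4}; because (ii) supplies infinitely many zeros in $(r_i)_{i\ge 1}$, this produces a bijection $w^+$ from $\Z_+$ onto an abstract countable totally ordered alphabet $\mathcal{A}_+$ with the prescribed right inversion counts. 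Then for $i = 0, -1, -2, \ldots$ insert a new letter $w_i$ into the word $w_{i+1} w_{i+2} \ldots$ at rank $r_i + 1$ from below, enlarging the alphabet one element at a time. For each fixed $j$, only finitely many letters from positions $>j$ end up below $w_j$ (namely $r_j$) and only finitely many from positions $<j$ end up above $w_j$ (namely $\ell_j$, finite by (i)); the complementary counts on each side are infinite, so in the final alphabet $\mathcal{A}$ the letter $w_j$ has infinitely many letters below and infinitely many above, and the finiteness of $r_j$ and $\ell_j$ rules out accumulation. Hence $\mathcal{A}$ is order-isomorphic to $(\Z,<)$. Relabelling $\mathcal{A}$ by $\Z$ in the unique balance-zero way --- the shift being pinned by (ii) --- identifies $w$ with a bijection $\sigma : \Z \to \Z$.

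\emph{Matching and uniqueness.} By construction the right inversion counts of $\sigma$ are $(r_i)$, and Lemma \ref{lemma8}(ii) applied to $\sigma$ shows its left inversion counts are precisely $(\ell_i)$; Lemma \ref{lemma7} then gives $\sigma(i) = i + r_i - \ell_i$, matching the initial definition and yielding admissibility together with balance zero. Any $\sigma' \in \Sym^\bal$ with the same $(r_i)$ has the same $\ell_i$ by \eqref{eq22} and hence coincides with $\sigma$ pointwise by Lemma \ref{lemma7}, establishing uniqueness. The delicate point throughout is the order-type identification of $\mathcal{A}$ with $(\Z,<)$: here (i) rules out accumulation on either side and (ii) fixes the global shift, so both conditions enter in an essential way.
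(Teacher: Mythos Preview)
Your proposal is correct and follows essentially the same route as the paper: both run the one-sided elimination algorithm to produce a total order on $\Z$ with the prescribed right inversion counts (the paper's Steps~1--2; your abstract alphabet $\mathcal{A}$), verify that the order type is $(\Z,<)$ from finiteness of $r_j$ and $\ell_j$ (the paper isolates this as Lemma~\ref{Lbij}), and then select the unique balance-zero representative, invoking Lemmas~\ref{lemma7} and~\ref{lemma8} for the formula and uniqueness. One small slip to correct in your closing summary: it is the balance-zero requirement, not condition~(ii), that pins the global shift among the $\Z$-family $s^{(n)}\sigma$; condition~(ii) is what makes the elimination algorithm exhaust the alphabet, while condition~(i) is what forces the resulting order to be of type $(\Z,<)$.
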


\begin{proof}
Condition (i) is necessary by the definition of admissible permutation. We
prove the remaining assertions in three steps.

\medskip

{\it Step 1.} The starting point  is the following extension of the elimination
algorithm.

\medskip

\noindent {\bf The one-sided infinite elimination algorithm}: Given a
nonnegative integer sequence $(r_1, r_2, \dots)$, construct a word
$w=w_1w_2\dots$ by setting first $w_1=r_1+1$, and  for $k>1$ defining $w_k$
recursively as the $(r_k+1)$st minimal element of the reduced list
$\Z_+\setminus\{w_1,\dots, w_{k-1}\}$.
\medskip

In general, the output $w$ need not be a permutation, and we need a condition
to guarantee that.

\begin{lemma}
A nonnegative integer sequence $(r_i,~ i\in\Z_+)$ is a sequence of the right
inversion counts for some permutation $\sigma\in\Sym_{\Z_+}$ if and only if
$r_i=0$ for infinitely many $i\in\Z_+$. In this case $\sigma$ is the output of
the infinite elimination algorithm applied to $(r_i,~ i\in\Z_+)$.
\end{lemma}

\begin{proof}
Suppose $(r_i, i\in \Z_+)$ are the right inversion counts of $\sigma\in
\Sym_{\Z_+}$. For each $i\in\Z_+$ there exist $r_i+1$ positions $j\geq i$ with
$\sigma(j)\leq\sigma(i)$. Since $1\leq r_i+1<\infty$, there exists position
$j^*\geq i$ with $\sigma(j^*)=\min(\sigma(j): j\geq i)$, but then obviously
$r_{j^*}=0$. Since $i$ was arbitrary, we conclude by induction that $r_j=0$ for
infinitely many $j$.

Conversely, suppose a sequence $(r_i)$ satisfies $r_{i_k}=0$ for
$i_1<i_2<\dots$ At step $i_k$ the algorithm selects
$w_{i_k}=\min(\Z_+\setminus\{w_1,\dots,w_{i_{k-1}}\})$, hence the generic $k\in
\Z_+$ is eliminated from the original list $1\,2\,\dots$ in at most $i_k$
steps. Thus eventually every positive integer is chosen, and the output $w$ is
a permutation word encoding some permutation $\sigma\in \Sym_{\Z_+}$.

Finally, suppose the output is a permutation. The elimination algorithm implies
that exactly $r_i$ integers smaller $w_i$ remain in the list at step $i$, and
these are eventually chosen at later steps. Hence there are $r_i$ right
inversions $(i,j), j>i$ for every $i\in \Z_+$.
\end{proof}

{\it Step 2.} We turn next to the connection between permutations and strict
orders. Note that the definition of inversion counts for permutations is
applicable to orders as well. For instance, given a strict order
$\,\triangleleft\,$, the corresponding left inversion count $\ell_i$ of a
number $i$ is the cardinality of the set $\{j: j<i, \, i\,\triangleleft\,j\}$.
For $\sigma\in\Sym_{\Z_+}$ we define a strict order $\,\triangleleft\,$ on
$\Z_+$ by setting $i\,\triangleleft\, j$ iff
$\sigma(i)<\sigma(j)$\footnote{Note that here we step away from the convention
\eqref{eq20} of Section \ref{section3}, where the order was defined through
$\sigma^{-1}$.}. The elimination algorithm has an obvious modification which
derives a word $(w_j, j\in\Z_{\geq i})$ from nonnegative integer input sequence
$(r_j,j\in\Z_{\geq i})$. If $(r_i, i\in\Z_+)$ satisfies condition (ii) of
Theorem \ref{L3} then for every $i\in\Z$ the elimination algorithm transforms
$(r_j,j\in\Z_{\geq i})$ in permutation word  $(w_j,j\in\Z_{\geq i})$, which in
turn corresponds to some order $\triangleleft$ on $\Z_{\geq i}$. For two
positions $i<j$, the relation according to the order $\,\triangleleft\,$ only
depends on $r_i,\dots,r_j$, thus there is a unique order $\,\triangleleft\,$ on
$\Z$ compatible with all $(r_i, i\in\Z)$. It follows that condition (ii) holds
if and only if $(r_i, i\in\Z)$ are the right inversion counts of some order
$\,\triangleleft$.

\medskip

{\it Step 3.} Assuming order $\,\triangleleft\,$ with finite right inversion
counts $(r_i, i\in\Z)$, the left inversion counts of $\,\triangleleft\,$  are
determined recursively from \eqref{eq22}. If all $\ell_i<\infty$, then the
order $\,\triangleleft\,$ is {\it admissible}, meaning that the order has
finite left and right inversion counts.

\begin{lemma}\label{Lbij}
Let $(\ell_j),(r_i)$ be the inversion counts of admissible order on $\mathbb
Z$. The formula $\sigma(i)=i-\ell_i+r_i$ establishes a bijection between the
set of admissible orders on\/ $\Z$ and\/ $\Sym^{\rm bal}$.
\end{lemma}

\begin{proof}
Consider  admissible order $\,\triangleleft\,$ with the inversion counts
$(\ell_i),(r_i)$. The relation $i\,\triangleleft\, j$ entails that the interval
$\{k\in\Z: i\,\triangleleft\, k\,\triangleleft\, j\}$ is finite, for otherwise
at least one of the inversion counts $\ell_i$, $r_i$, $\ell_j$ $r_j$ were
infinite. For similar reason $(\Z,\triangleleft)$ has neither minimal nor
maximal element. It follows that the ordered set $(\Z,\triangleleft)$ is
isomorphic to $(\Z,<)$, but then there exists a bijection $\si:\Z\to\Z$ such
that $i\triangleleft j$ iff $\si(i)<\si(j)$. By Proposition \ref{prop2}, such
permutation $\si$ is admissible. The only freedom in the choice of $\sigma$
stems from the observation that $s^{(n)}\si$ yields the same order, where
$s^{(n)}$ is an arbitrary shift permutation. Since the shift affects the
balance we may choose $\si$ balanced, and then it is defined uniquely. Finally,
for balanced $\sigma$ the relation $\sigma(i)=i-\ell_i+r_i$ is guaranteed by
Lemma \ref{lemma7}.
\end{proof}

We have verified that conditions (i) and (ii) of Theorem \ref{L3} are also
sufficient. This completes the proof of the theorem.
\end{proof}

We did not succeed to replace condition (i) by a substantially simpler one. For
$\sigma\in\Sym^{\rm bal}$ there must be also infinitely many $i<0$ with
$r_i=0$, however this together with (ii) still does not suffice. For instance,
the  permutation defined by
$$
\sigma(-2k)=-k, \quad \sigma(-2k-1)=2k+1, \quad \sigma(k)=2k \quad \textrm{for
$k\in\Z_+$},
$$
has $r_i$ finite for all $i$, $r_i=0$ for $i\geq 0$ and even $i<0$, but
$\ell_0=\infty$.

The following result follows from Theorem \ref{thm3}. We feel nevertheless that
a direct derivation  adds insight. As before, $G$ stands for the geometric
distribution \eqref{eq23} with parameter $q\in(0,1)$.

\begin{lemma}\label{prop3}
The random collection $(r_i: i\in\Z)$ obtained by independent sampling from $G$
satisfies both conditions of Theorem \ref{L3} almost surely.
\end{lemma}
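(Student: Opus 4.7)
The plan is to verify conditions (i) and (ii) of Theorem \ref{L3} separately, with condition (ii) being immediate and condition (i) reducing to the Markov chain analysis already introduced in the first proof of Theorem \ref{thm4}.

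For condition (ii), note that the events $\{r_i = 0\}$, $i \in \Z_+$, are independent under $\G$, each of probability $1-q > 0$. The second Borel-Cantelli lemma gives $r_i = 0$ infinitely often in $\Z_+$ almost surely.

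For condition (i), I would fix $j \in \Z$ and re-index the quantities from \eqref{eq21} by setting $x_n := r^{(j-n)}_j$ for $n \ge 0$. The recursion \eqref{eq21} then reads
$$
x_0 = r_j, \qquad x_{n+1} = x_n + \mathbf1\bigl(x_n \ge r_{j-n-1}\bigr).
$$
Since $r_{j-n-1}$ is $G$-distributed and independent of $r_j, r_{j-1}, \dots, r_{j-n}$ (hence of $x_n$), the sequence $(x_n)$ is a non-decreasing Markov chain starting at the random state $r_j$, with transitions
$$
\Prob(k \to k) = \Prob(r_{j-n-1} > k) = q^{k+1}, \qquad \Prob(k \to k+1) = 1 - q^{k+1}.
$$
This is exactly the chain identified in the first proof of Theorem \ref{thm4} in connection with \eqref{transprob}. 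By \eqref{eq22}, $\ell_j$ equals the total number of $0$-increments of the chain, i.e.\ $\sum_{n \ge 0} \mathbf1(x_{n+1}=x_n)$.

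To conclude, condition on $r_j = r$: the time spent at state $k$ before jumping to $k+1$ is a geometric random variable with success probability $1 - q^{k+1}$, so the expected number of $0$-increments at state $k$ is $q^{k+1}/(1-q^{k+1})$. Summing over $k \ge r$ gives
$$
\mathbb{E}[\ell_j \mid r_j = r] \le \sum_{k \ge r} \frac{q^{k+1}}{1-q^{k+1}} \le \frac{1}{1-q}\sum_{k \ge r} q^{k+1} < \infty.
$$
Averaging over $r_j$ shows $\mathbb{E}[\ell_j] < \infty$, hence $\ell_j < \infty$ almost surely. Since $\Z$ is countable, all $\ell_j$ are simultaneously finite almost surely. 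Combined with (ii), both conditions of Theorem \ref{L3} hold $\G$-almost surely. The only real step is the Markov-chain reading of the recursion \eqref{eq21}; once that is in place, the finiteness of $\ell_j$ follows from a bare expectation estimate, with no need to invoke the generating function identity \eqref{genefu} or the exact formula \eqref{RgivL}.
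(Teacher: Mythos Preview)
Your proof is correct and follows essentially the same route as the paper: condition (ii) is immediate, and for (i) both you and the paper identify the Markov chain $(x_n)$ with transitions \eqref{transprob} and conclude by showing $\mathbb{E}[\ell_j\mid r_j=r]<\infty$. The only cosmetic difference is in how the expectation is bounded---you sum the geometric sojourn means $q^{k+1}/(1-q^{k+1})$ over states $k\ge r$, whereas the paper sums over time steps $m$ and uses the crude estimate $\Prob(x_m=r+k)\le q^{m-k}$; both give a bound of order $(1-q)^{-2}q^{r+1}$.
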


\begin{proof}
Condition (ii) is obvious, since $G$ gives positive mass to $0$.

To check (i), we  fix $j\in\Z$ and prove that for every  $r\in\{0,1,2,\dots\}$,
conditionally  given $r_j=r$, the quantity $\ell_j$ determined from
\eqref{eq22} is finite almost surely. Indeed, the sequence
$$
(x_0,x_1,x_2,\dots):=(r^{(j)}_j, r^{(j-1)}_j, r^{(j-2)}_j, \dots)
$$
defined by recursion \eqref{eq21} is a nondecreasing Markov chain on
$\{r,r+1,\dots\}$ with 0\,-1 increments, the initial state $x_0=r$, and the
one-step transition probabilities \eqref{transprob}. As we have already pointed
out, \eqref{eq22} entails that $\ell_j$ is the total number of the 0-increments
of this Markov chain. To show that $\ell_j<\infty$ almost surely it suffices to
prove that the expectation of $\ell_j$ is finite. The expectation is
$$
{\mathbb E}\ell_j=
\sum_{m=0}^\infty\sum_{k=0}^\infty\Prob(x_m=r+k)q^{r+k+1}
=q^{r+1}\sum_{k=0}^\infty q^k\sum_{m=0}^\infty \Prob(x_m=r+k),
$$
so it is enough to check that the last sum over $m\ge0$ is bounded by a constant
independent of $k$.
Note that $\Prob(x_m=r+k)$ vanishes unless $m\ge k$, so that we may start
summation from $m=k$. Then the event $x_m=r+k$ means that among the first $m$
moves of the chain there are exactly $m-k$ trivial transitions. The probability
of this event does not exceed $q^{m-k}$, because the probability of
any trivial transition is at most $q$. Therefore,
$$
\sum_{m=k}^\infty \Prob(x_m=r+k)\le \sum_{m=k}^\infty
\Prob(x_m=r+k)q^{m-k}=(1-q)^{-1},
$$
which concludes the proof.
\end{proof}

\subsection{Approximation by finite permutations.}

We proceed with the convention of Section \ref{subsection7.1}, which implies
that a bijection between $\sigma\in\Sym^{\rm bal}$ and admissible orders on $\Z$
is established via $i\,\triangleleft\, j\Leftrightarrow \sigma(i)<\sigma(j)$.
As above, for an interval $I\subset\Z$, we denote by $\Sym_I\subset\Sym$ the
subgroup of permutations that do not move integers outside $I$. Below we will
deal with finite intervals only; then $\Sym_I$ is naturally isomorphic to the
finite symmetric group of degree $\#I$. For $\sigma\in\Sym^{\rm bal}$  we have
a unique increasing bijection between $\{\sigma(i), i\in I\}$ and $I$.
Replacing each $\sigma(i)$ with its counterpart  by this bijection, and
otherwise setting $\sigma_I(i)=i$ for $i\notin I$ maps $\sigma\in\Sym^{\rm
bal}$ to some $\sigma_I\in\Sym_I$,  a {\it truncation\/} of $\si$. In other
words, the orders induced by $\si$ and $\si_I$ coincide on $I$. For instance,
for $I=\{1,2,3\}$, the truncation of any permutation with pattern
$\dots|4~3~2~1~6\dots$ gives $\dots0|3~2~1~4\dots$ (here the vertical bar is used
to separate positions $0$ and $1$).

With reference to the discussion at the end  of Section \ref{section2}, we
endow  $\Sym^{\rm bal}$ with the topology inherited from $\Sym$. Recall that in
this topology, the convergence $\sigma_n\to\sigma$ means
$\sigma_n(i)=\sigma(i)$ for each $i$ and all $n$ larger than some $n(i)$. In
what follows $I_n$ $(n\in{\mathbb N})$ stands for arbitrary increasing sequence
of finite intervals in $\Z$ whose union is the whole $\Z$. One obvious choice
is $I_n=\{-n,-n+1,\dots,n-1,n\}$.

\begin{lemma}
For every  $\sigma\in\Sym^{\rm bal}$ we have $\sigma_{I_n}\to \sigma$.
\end{lemma}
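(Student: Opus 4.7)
My plan is to unwind the weak topology: $\si_{I_n} \to \si$ in $\Sym$ means that for every fixed $i \in \Z$ one has $\si_{I_n}(i) = \si(i)$ for all sufficiently large $n$. So I fix $i \in \Z$, write $I_n = [a_n, b_n]$ with $a_n \to -\infty$ and $b_n \to +\infty$, and take $n$ large enough that both $i$ and $\si(i)$ lie in $I_n$. By the very definition of the truncation $\si_{I_n}$ (the unique increasing bijection $\si(I_n) \to I_n$ applied to $\si(i)$),
\[
\si_{I_n}(i) \;=\; a_n \;+\; \#\{j \in I_n : \si(j) < \si(i)\},
\]
so the goal reduces to showing $\#\{j \in I_n : \si(j) < \si(i)\} = \si(i) - a_n$ for all large $n$.

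The heart of the argument is to exploit balanced admissibility for the splitting $\Z = \Z_{<\si(i)} \sqcup \Z_{\ge \si(i)}$. Define the two ``wrong-side'' sets
\[
F_1 := \{j < \si(i) : \si(j) \ge \si(i)\}, \qquad F_2 := \{j \ge \si(i) : \si(j) < \si(i)\}.
\]
They are exactly the supports of the off-diagonal blocks of $A(\si)$ for this splitting, so admissibility makes $|F_1|, |F_2| < \infty$, and $b(\si) = 0$ (which is independent of the splitting point, as observed right after the definition of the balance) gives $|F_1| = |F_2|$.

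Since $F_1 \cup F_2$ is finite and $I_n \uparrow \Z$, I may enlarge $n$ so that $F_1 \cup F_2 \subset I_n$. Splitting $I_n$ into its intersections with $\Z_{<\si(i)}$ and $\Z_{\ge \si(i)}$, and noting that $\#(I_n \cap \Z_{<\si(i)}) = \si(i) - a_n$,
\[
\#\{j \in I_n : \si(j) < \si(i)\} \;=\; \bigl(\#(I_n \cap \Z_{<\si(i)}) - |F_1|\bigr) + |F_2| \;=\; (\si(i) - a_n) - |F_1| + |F_2|,
\]
which collapses to $\si(i) - a_n$ by $|F_1| = |F_2|$. The only non-mechanical step is the identification $|F_1| = |F_2|$ via the balance condition; the rest is elementary counting, so I anticipate no real obstacle.
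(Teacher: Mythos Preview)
Your argument is correct. The rank formula $\si_{I_n}(i)=a_n+\#\{j\in I_n:\si(j)<\si(i)\}$ is exactly what the definition of the truncation gives, and your counting identity via the two ``wrong-side'' sets $F_1,F_2$ is a clean use of the balance condition at the splitting point $a=\si(i)-1$.

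The paper's proof takes a different route: it works with the left and right inversion counts $\ell_i,r_i$ of $\si$ and their truncated analogues $\ell_i^{(n)},r_i^{(n)}$ for $\si_{I_n}$, observes that these are nondecreasing in $n$ and eventually stabilise at $\ell_i,r_i$, and then invokes the identity $\si(i)=i-\ell_i+r_i$ (Lemma~\ref{lemma7}) together with its finite analogue $\si_{I_n}(i)=i-\ell_i^{(n)}+r_i^{(n)}$. Both proofs ultimately rest on $b(\si)=0$ giving an equality of two off-diagonal counts, but the paper splits the \emph{domain} at the position $i$ (leading to $\ell_i,r_i$), whereas you split it at the value $\si(i)$ (leading to $F_1,F_2$). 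Your version is more self-contained---it needs only the definition of balance and elementary counting---while the paper's version plugs naturally into the inversion-count machinery developed earlier and makes the monotone stabilisation of the counts explicit.
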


\begin{proof}
Let $(\ell_i)$ and $(r_i)$ be the inversion counts of $\si$, and let
$(\ell^{(n)}_i)$ and $(r^{(n)}_i)$ be the similar quantities for $\si_{I_n}$.
For every fixed $i\in\Z$, the sequences $(\ell_i^{(n)})$ and $(r_i^{(n)})$
are nondecreasing. Indeed, the larger set $I_n$ the larger
the set of inversions in
$\sigma_{I_n}$
associated with $i$. Next,
observe that these sequences stabilize for large $n$ to values $\ell_i$ and
$r_i$, respectively. Since $\si(i)=i-\ell_i+r_i$ and
$\si_{I_n}(i)=i-\ell^{(n)}_i+r^{(n)}_i$, this just means that $\si_{I_n}\to\si$.
\end{proof}

Immediately from the lemma and considerations in Section \ref{section3} we
derive:

\begin{proposition}\label{propopo}
Let\/ $\Sigma$ be the random  Mallows permutation
of\/ $\Z$, and let \/ $\Sigma_{I_n}$ be the truncation of\/ $\Si$ corresponding to
$I_n$. Then $\Sigma_{I_n}\to\Sigma$ with probability one.
\end{proposition}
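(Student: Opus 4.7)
The plan is simply to transfer the preceding deterministic lemma to the random setting. By Theorem \ref{thm2} (a result from Section \ref{section3}) the Mallows measure $\Q$ is concentrated on $\Sym^\bal$, so the random permutation $\Sigma$ takes values in $\Sym^\bal$ with probability one. The preceding lemma asserts that for \emph{every} fixed $\sigma\in\Sym^\bal$ the deterministic truncations $\sigma_{I_n}$ converge to $\sigma$ in the weak topology on $\Sym$. Evaluating this statement at each realization of $\Sigma$ yields $\Sigma_{I_n}(\omega)\to\Sigma(\omega)$ on the full-measure event $\{\Sigma\in\Sym^\bal\}$, which is exactly the desired almost sure convergence.

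The only item that warrants comment is measurability of $\Sigma_{I_n}$, i.e.\ that $\sigma\mapsto\sigma_{I_n}$ is a Borel map from $\Sym^\bal$ to $\Sym_{I_n}$. This is immediate from the definition of truncation: for a finite interval $I$, the value $\sigma_I(i)$ depends only on the relative order of the finitely many values $\{\sigma(j):j\in I\}$, and each comparison $\sigma(j)<\sigma(j')$ is a locally constant function of $\sigma$ in the weak topology introduced in Section \ref{section2}. Hence $\Sigma_{I_n}$ is a genuine random element of $\Sym_{I_n}$, and the pointwise convergence furnished by the lemma gives the almost sure convergence claimed. No serious obstacle arises — all the work lies in the preceding deterministic lemma, and the probabilistic content is reduced to the support statement for $\Q$.
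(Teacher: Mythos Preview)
Your proof is correct and follows exactly the same route as the paper: the paper derives the proposition ``immediately from the lemma and considerations in Section~\ref{section3}'', which is precisely your argument of applying the deterministic convergence $\sigma_{I_n}\to\sigma$ pointwise on the full-measure set $\Sym^\bal$. Your added remark on measurability is a reasonable elaboration, but the core reasoning is identical.
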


Note that the law of $\Sigma_{I_n}$ is essentially the Mallows distribution on
the finite symmetric group of degree $\#I_n$.

Proposition \ref{propopo} enables us to give alternative derivations to
Corollary \ref{cor1} and to the most technical part of Theorem \ref{thm2}
(Lemma \ref{lemma2}) from the analogous properties of the Mallows distributions
\eqref{MalFin} on finite symmetric groups. To that end, one just needs to
observe that the inversion mapping $\sigma\to\sigma^{-1}$ is a homeomorphism of
$\Sym^\bal$ endowed with the weak topology. Now, $\Sigma_{I_n}\to\Sigma$ taken
together with the equality in the law
$\Sigma_{I_n}\stackrel{d}{=}(\Sigma_{I_n})^{-1}$ for finite permutations imply
$\Sigma\stackrel{d}{=}\Sigma^{-1}$.

We emphasize that it is nowhere stated, nor is true, that
$(\sigma_I)^{-1}=(\sigma^{-1})_I$. Examples are easily designed to show that
the truncation and the inversion operation do not commute.

Finally, we note that the Mallows measures on $\Sym_I$ with finite $I$, and the
system of the joint distributions of $(\Sigma(i), ~i\in I)$ (computed in
Section \ref{section6}) are two competing families of ``finite dimensional
distributions'' for $\Sigma$ considered as a random function. The suitability
of one or another system in concrete situation depends on the nature of the
statistic of permutation under study.

\end{document}